\newcommand{\comment}[1]{}
\newtheorem{thm}{Theorem}[section]
\newtheorem{cor}[thm]{Corollary}
\newcommand{\xleftrightarrow}[2][]{\ext@arrow 3359\leftrightarrowfill@{#1}{#2}}
\newcommand{\xdashrightarrow}[2][]{\ext@arrow 0359\rightarrowfill@@{#1}{#2}}
\newcommand{\xdashleftarrow}[2][]{\ext@arrow 3095\leftarrowfill@@{#1}{#2}}
\newcommand{\xdashleftrightarrow}[2][]{\ext@arrow 3359\leftrightarrowfill@@{#1}{#2}}
\def\rightarrowfill@@{\arrowfill@@\relax\relbar\rightarrow}
\def\leftarrowfill@@{\arrowfill@@\leftarrow\relbar\relax}
\def\leftrightarrowfill@@{\arrowfill@@\leftarrow\relbar\rightarrow}
\def\arrowfill@@#1#2#3#4{%
  $\m@th\thickmuskip0mu\medmuskip\thickmuskip\thinmuskip\thickmuskip
   \relax#4#1
   \xleaders\hbox{$#4#2$}\hfill
   #3$%
}
\begin{document}
\title{Iterated Diffusion Maps for Feature Identification}
\author[rvt]{Tyrus Berry\corref{cor}}
\ead{tberry@gmu.edu}
\author[rvt,rvt2]{John Harlim}
\ead{jharlim@psu.edu}
\cortext[cor]{Corresponding author}
\address[rvt]{Department of Mathematical Sciences, George Mason University, 4400 Exploratory Hall, Fairfax, Virginia  22030, USA}
\address[rvt2]{Department of Meteorology, the Pennsylvania State University, 503 Walker Building, University Park, PA 16802-5013, USA}
\date{\today}

\begin{abstract}
Recently, the theory of diffusion maps was extended to a large class of \emph{local kernels} with exponential decay which were shown to represent various Riemannian geometries on a data set sampled from a manifold embedded in Euclidean space.  Moreover, local kernels were used to represent a diffeomorphism $\mathcal{H}$ between a data set and a feature of interest using an anisotropic kernel function, defined by a covariance matrix based on the local derivatives $D\mathcal{H}$.  In this paper, we generalize the theory of local kernels to represent degenerate mappings where the intrinsic dimension of the data set is higher than the intrinsic dimension of the feature space.  First, we present a rigorous method with asymptotic error bounds for estimating $D\mathcal{H}$ from the training data set and feature values.  We then derive scaling laws for the singular values of the local linear structure of the data, which allows the identification the tangent space and improved estimation of the intrinsic dimension of the manifold and the bandwidth parameter of the diffusion maps algorithm. Using these numerical tools, our approach to feature identification is to iterate the diffusion map with appropriately chosen local kernels that emphasize the features of interest.  We interpret the iterated diffusion map (IDM) as a discrete approximation to an intrinsic geometric flow which smoothly changes the geometry of the data space to emphasize the feature of interest. When the data lies on a manifold which is a product of the feature manifold with an irrelevant manifold, we show that the IDM converges to the quotient manifold which is isometric to the feature manifold, thereby eliminating the irrelevant dimensions. We will also demonstrate empirically that if we apply the IDM to features which are not a quotient of the data manifold, the algorithm identifies an intrinsically lower-dimensional set embedding of the data which better represents the features. 

\end{abstract}

\begin{keyword}
diffusion maps \sep local kernel \sep iterated diffusion map \sep dimensionality reduction \sep feature identification


\end{keyword}

\maketitle

\section{Introduction}\label{section1}

Often, for high-dimensional data and especially for data lying on a nonlinear subspace of Euclidean space, the variables of interest do not lie in the directions of largest variance and this makes them difficult to identify.  In this paper we consider the supervised learning problem, where we have a training data set, along with the values of the features of interest for this training data. Throughout this manuscript we will assume that the training data set consists of data points which are near a manifold $\mathcal{M}$ embedded in an $m$-dimensional Euclidean space; we refer to $\mathcal{M}$ as the `data space' or `data manifold'.  We also assume that we have a set of feature values corresponding to each training data point, and these feature values are assumed to lie near a manifold $\mathcal{N}$ embedded in an $n$-dimensional Euclidean space; we refer to $\mathcal{N}$ as the `feature space' or `feature manifold'.  We assume that the feature manifold is intrinsically lower-dimensional than the data manifold, so intuitively the data manifold contains information which is irrelevant to the feature, and we refer to this information broadly as the `irrelevant variables' or the `irrelevant space'.  In some contexts we will be able to identify the irrelevant space explicitly, for example the data manifold may simply be a product manifold of the feature manifold with an irrelevant manifold.  However, more complex relationships between the data manifold, feature manifold, and irrelevant variables are possible.  We will think of the feature space as arising from a function defined on the data space, and our goal is to represent this function.  However, this will only be possible in some contexts such as the product manifold described above.  More generally, our goal is to find a mapping from the data space to an intrinsically lower-dimensional space which contains all the information of the feature space but is, as far as possible, independent of the irrelevant variables.

Recently a method for formally representing a diffeomorphism between manifolds using discrete data sets sampled from the manifolds was introduced in \cite{LK}.  In particular, a diffeomorphism is represented using a \emph{local kernel} to \emph{pull back} the Riemannian metric from one manifold onto the other.  With respect to the intrinsic geometry of the local kernel, the manifolds are isometric, and the isometry can be represented by a linear map between the eigenfunctions of the respective Laplacian operators. In this paper, we consider the more difficult case when the manifolds are not diffeomorphic, so that one manifold may even be higher dimensional than the other.  This represents the scenario described above, where some of the variables of a high-dimensional data set may be irrelevant to the features of interest.  

The challenge of having irrelevant variables is that it violates the fundamental assumption of differential geometry, namely that it is local. This is because data points which differ only in the irrelevant variables will be far away in the data space and yet have the same feature values. This fundamental issue is independent of the amount of data available and is illustrated in Figure \ref{annulus1}. Namely, if the feature of interest is the radius of an annulus, then points on opposite sides of the annulus are closely related with respect to this feature of interest. Conversely, points which are far away in the feature space may appear relatively close in data space; this can occur when many of the irrelevant variables are very close. Of course, in the limit of large data, points being close in data space implies that they are close in feature values. However, a large number of irrelevant variables can easily overwhelm any finite data set due to the curse-of-dimensionality. Intuitively, the presence of irrelevant variables makes it difficult to determine the true neighbors. 

\begin{figure}[h]
\begin{center}
\includegraphics[trim={6cm 0 4.5cm 0},clip,width=.95\linewidth]{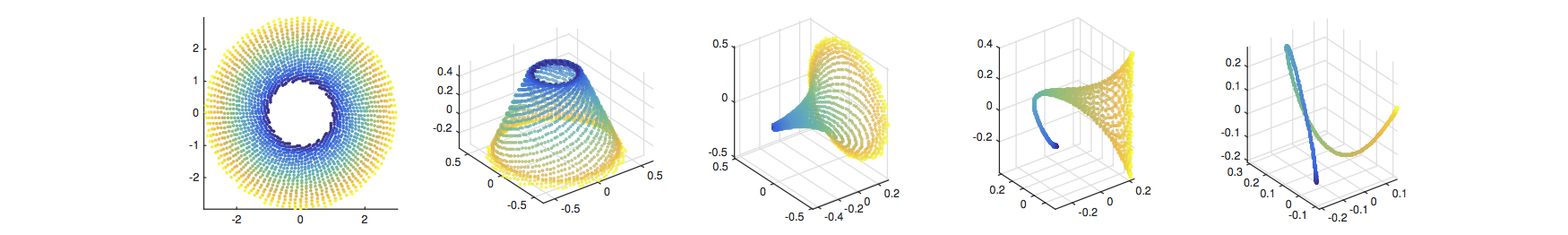}
 \includegraphics[trim={6cm 0 4.5cm 0},clip,width=.95\linewidth]{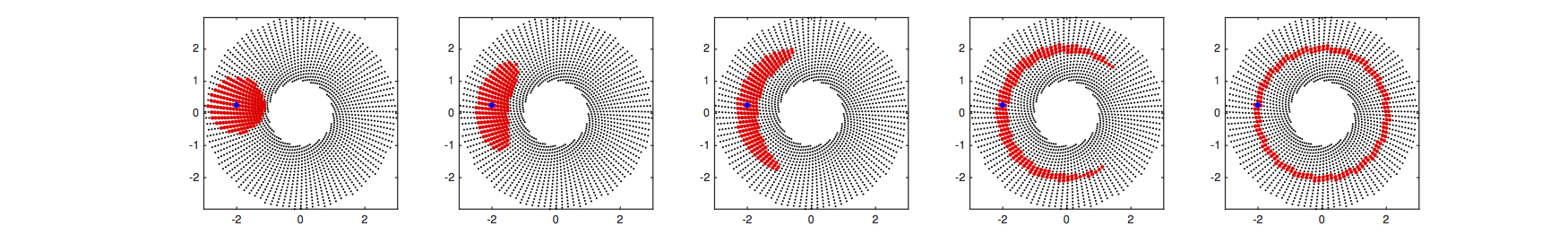}
\caption{\label{annulus1} Top: Original data set colored according to the desired feature (leftmost) followed by four iterations of the diffusion map using a local kernel defined in Section \ref{section4}.  Bottom: Original data set showing the 200 nearest neighbors (red) of the blue data point, where the neighbors are found in the corresponding iterated diffusion map space.  Notice that as the iterated diffusion map biases the geometry towards the desired feature (the radius), the neighbors evolve towards the \emph{true} neighbors with respect to the desired feature.}
\end{center}
\end{figure}

Intuitively our goal is to determine the true neighbors of every point in the data space, meaning the points in the data space which have similar feature values regardless of the values of the irrelevant variables.  The key difficulty is that we need a method which can be extended to new data points, since the goal of representing the map $\mathcal{H}$ is to be able to apply this map to new points in data space.  In order to learn the map $\mathcal{H}$ we will assume that we have a training data set for which the feature values are known.  Of course, for the training data set, we could easily find the true neighbors of the training data points by using the known feature values.  However, finding the neighbors using the feature values cannot be used for determining the true neighbors of a new data point, since the goal is to determine the feature values of the new data point.  Instead, we propose an iterative mapping which smoothly distorts the data space in a way that contracts in the directions of the irrelevant variables and expands in the directions of the features.  Crucially, this iterative mapping can be smoothly extended to new data points for which the feature values are unknown, allowing us to extend the feature map to these new data points.

In this paper, we introduce the Iterated Diffusion Map (IDM) which is an iterative method that smoothly distorts the data space in a way that contracts in the directions of the irrelevant variables and expands in the directions of the features. We illustrate our method for the purposes of intuitive explanation in Figure \ref{annulus1}. In this example, the original data set is an annulus in the plane, but the variable of importance (represented by color in the top images) is the radial component of the annulus, meaning that the angular component is an irrelevant dimension of the manifold.  The initial neighborhood is simply an Euclidean ball in the plane as shown in the bottom row of images.  In the subsequent images we apply the diffusion map multiple times to evolve the data set in a way that biases it towards the desired feature. Notice that as the geometry evolves, the notion of neighborhood evolves.  In the bottom right image, we see that after four iterations of the diffusion map, the notion of neighbor has grown to include any points which have the same radius, independent of their angle. Moreover, after four iterations, we see that points that were initially very close neighbors, namely points that have the same angle but slightly different radii, are no longer neighbors.  So after applying the IDM, the notion of neighbor becomes very sensitive to the feature (radius) and independent of the irrelevant variable (angle). 

As we will explain in Section \ref{productman}, the example in Figure \ref{annulus1} has a particularly nice structure, namely the full data set is a product space of the desired feature with the irrelevant variables.  When this structure is present we will be able to interpret the iterated diffusion map as a discretization of a geometric evolution which contracts the irrelevant variables to zero, thereby reconstructing the quotient map.  When the product space structure is not present, the iterated diffusion map recovers a more complex structure which is not yet theoretically understood.  In Section \ref{examples} we will give several simple examples of both product spaces and non-product spaces that illustrate the current theory and its limitations.  Naturally, if one wishes to understand every feature of a data set, there is no advantage to the iterated diffusion map.  However, often we can identify desirable features in training data sets, and the IDM finds an extendable map to an intrinsically lower dimensional space which better represents the desired features.   

As we will see below, the construction of IDM requires several tools. In Section~\ref{section21}, we will show that iterating the standard diffusion map of \cite{diffusion} has no effect (after the first application of the diffusion map, subsequent applications will approximate the identity map when appropriately scaled).  We will see that the isotropic kernel used in the standard diffusion map yields a canonical isometric embedding of the manifold.  In Section~\ref{section22}, we review how \emph{local kernels}, can change the geometry of the manifold and obtain an isometric embedding with respect to the new geometry.  Local kernels are a broad generalization of the isotropic kernels used in \cite{diffusion} and were shown in \cite{LK} to be capable of representing any geometry on a manifold.  

To construct a local kernel that emphasizes the feature directions, we will need to estimate the derivative of the feature map, $D\mathcal{H}$. In Section~\ref{section3}, we give a rigorous method of estimating $D\mathcal{H}$, including asymptotic error bounds, based on a weighted local linear regression.  Moreover, in Section \ref{svd}, by applying this weighted local linear regression from the manifold to itself we derive scaling laws for the singular values of the local linear structure near a point on the manifold described by the data.  The scaling laws of the singular values yield a robust method of identifying the tangent space of the manifold near a point.  Finally, these scaling laws allow us to devise more robust criteria of determining the intrinsic dimension of manifold, as well as the local bandwidth parameter of the diffusion maps algorithm.  

With the tools of Sections \ref{section2} and \ref{section3}, our goal is to use $D\mathcal{H}$ to construct a new geometry on the data set that emphasizes the feature of interest.  In Section \ref{section4} we will see that naively forming an anisotropic kernel (following \cite{LK}) using a rank deficient matrix $D\mathcal{H}$ will not satisfy the assumptions that define a local kernel.  So, in order to represent the feature map $\mathcal{H}$, we cannot directly apply the local kernels theory of \cite{LK}, which only applies to diffeomorphisms.  Instead, in Section \ref{section4} we introduce the IDM as a discrete approximation of a geometric flow that contracts the irrelevant variables and expands the feature variables on the manifold.  In Section \ref{productman} we show that when the data manifold is the product of the feature manifold with irrelevant variables, this geometric flow will recover the quotient map from the data manifold to the feature manifold.  In Section \ref{examples} we give several numerical examples demonstrating the IDM and we also include the IDM numerical algorithm in \ref{numerics}. We close the paper with a short summary, highlighting the advantages and limitations.

\section{Background}\label{section2}

In this section, we review recent key results that are relevant to the method developed in this paper. First, we remind the readers that, up to a scalar factor, a diffusion map is an isometric embedding of the manifold represented by a data set.  Second, we briefly review the recently developed method for representing diffeomorphism between manifolds \cite{LK}, which we will use as a building block.

\subsection{The Diffusion Map as an Isometric Embedding}\label{section21}

A natural distance that respects the nonlinear structure of the data is the geodesic distance, which can be approximated as the shortest path distance. However, the shortest path distance is very sensitive to small perturbations of a data set. A more robust metric that also respects the nonlinear structure of the data is the diffusion distance which is defined as an average over all paths. This metric can be approximated by the Euclidean distance of the data points in the embedded space constructed by the diffusion maps algorithm \cite{diffusion}.

For a Riemannian manifold $\mathcal{M}$ with associated heat kernel $k(t,x,y)$, we can define the diffusion distance as,
\[ D_t(x,y)^2 = || k(t,x,\cdot)-k(t,y,\cdot)||^2_{L^2(\mathcal{M})} = \int_{\mathcal{M}} (k(t,x,u)-k(t,y,u))^2 dV(u), \]
for $x,y \in \mathcal{M}$ where $dV$ is the volume form on $\mathcal{M}$ associated to the Riemannian metric which corresponds to $k$.  We can write the heat kernel as $k(t,x,y) = \left(e^{t\Delta}\delta_x\right)(y)$ where $\delta_x$ is the Dirac delta function and $\Delta$ is the (negative definite) Laplace-Beltrami operator on $\mathcal{M}$ with eigenfunctions $\Delta \varphi_i = \lambda_i \varphi_i$, where $0 = \lambda_0>\lambda_1>\lambda_2>\ldots$.  Using the Plancherel equality the diffusion distance becomes,
\[ D_t(x,y)^2 = || e^{t\Delta}\delta_x - e^{t\Delta}\delta_y ||^2_{L^2(\mathcal{M})} = \sum_{i=1}^{\infty} \left< e^{t\Delta}\delta_x - e^{t\Delta}\delta_y , \varphi_i \right>^2 = \sum_{i=1}^{\infty} e^{2t\lambda_i}(\varphi_i(x)-\varphi_i(y))^2, \]
where the term $i=0$ is zero since $\varphi_0$ is constant.  Defining the diffusion map by,
\[ \Phi_t(x) = (e^{t\lambda_1}\varphi_1(x),...,e^{t\lambda_M}\varphi_M(x))^\top, \] 
for $M$ sufficiently large, the diffusion distance is well approximated by the Euclidean distance in the diffusion coordinates, $D_t(x,y) \approx ||\Phi_t(x)-\Phi_t(y)||$.  The key to making this idea practical is the algorithm of \cite{diffusion} which uses the data set $\{x_i\}$ sampled from a manifold $\mathcal{M}\subset\mathbb{R}^m$
to construct a sparse graph Laplacian $L$ that approximates the Laplace-Beltrami operator, $\Delta$ on $\mathcal{M}$. 

Of course, the dimension $M$ of the diffusion coordinates will depend on the parameter $t$, which is intuitively a kind of coarsening parameter.  For small $t$ we have, 
\[ \left<e^{t\Delta}\delta_x,\delta_y\right> = (4\pi t)^{-d/2} e^{-d_g(x,y)^2/(4t)}(u_0(x,y)+\mathcal{O}(t)), \] 
where $d_g(x,y)$ is the geodesic distance and $d$ is the intrinsic dimension of $\mathcal{M}$ (see for example \cite{laplacianBook}).  The function $u_0(x,y)$ is the first term in the heat kernel expansion.  In \cite{laplacianBook} the following formula is derived for $u_0(x,y)$,
\[ u_0(x,y) = | d(\textup{exp}_x^{-1})(y) |^{1/2} = | I_{d\times d} + \mathcal{O}(d_g(x,y)^2) |^{1/2} = 1 + \mathcal{O}(d_g(x,y)^d) \]
where $\exp_x$ is the exponential map based at $x$, and the expansion follows from noting that $\textup{exp}_x$ is a smooth map with first derivative equal to the identity at $x$ and second derivative orthogonal to the tangent plane.  Using the expansion of $u_0(x,y)$, for $d_g(x,y)$ sufficiently small, we have the following expansion of the heat kernel,
\begin{align}\label{heatkernel} 
\left<e^{t\Delta}\delta_x,\delta_y\right> = (4\pi t)^{-d/2} e^{-d_g(x,y)^2/(4t)}(1+\mathcal{O}(t,d_g(x,y)^d)), 
\end{align}
and below we will bound the error by the worst case of the intrinsic dimension, namely $d=1$.  Using the heat kernel expansion \eqref{heatkernel}, we can expand the diffusion distance as,
\begin{align}\label{DD} D_t(x,y)^2 &=\left<e^{2t\Delta}\delta_x,\delta_x\right> + \left<e^{2t\Delta}\delta_y,\delta_y\right> -2 \left<e^{2t\Delta}\delta_x,\delta_y\right> = (8\pi t)^{-d/2} \left(2 - 2 e^{-d_g(x,y)^2/(8t)}\right)(1+\mathcal{O}(t,d_g(x,y))) \nonumber \\ 
&= (8\pi t)^{-d/2} \left(2 - 2 \left(1 - d_g(x,y)^2/(8t) + \mathcal{O}(d_g(x,y)^4/t^2) \right) \right)(1+\mathcal{O}(t,d_g(x,y))) \nonumber \\
&= (8\pi t)^{-d/2}(4t)^{-1}d_g(x,y)^2 (1 + \mathcal{O}(d_g(x,y)^2/t))(1+\mathcal{O}(t,d_g(x,y))) \nonumber \\
&=  \frac{d_g(x,y)^2}{(2\pi)^{d/2}(4t)^{d/2+1}} \left(1 + \mathcal{O}(t,d_g(x,y),d_g(x,y)^2/t) \right). \end{align}
Based on \eqref{DD} we define the rescaled diffusion map by,
\[ \hat\Phi_t(x) = (2\pi)^{d/4}(4t)^{d/4+1/2}\Phi_t(x), \]
and the rescaled diffusion distance, $\hat D_t(x,y) \equiv (2\pi)^{d/4}(4t)^{d/4+1/2}D_t(x,y)$, which is approximated by,
\[ \hat D_t(x,y)^2 \approx ||\hat\Phi_t(x) - \hat\Phi_t(y)||^2 \approx (2\pi)^{d/2}(4t)^{d/2+1}D_t(x,y)^2 = d_g(x,y)^2 + \mathcal{O}(t \, d_g(x,y)^2, d_g(x,y)^3, d_g(x,y)^4/t), \]
so that for $d_g(x,y)^2 \ll t \ll 1$ the rescaled diffusion distance approximates the geodesic distance.  

Notice that the rescaled diffusion distance only approximates the geodesic distance when the geodesic distance is small.  In particular, the diffusion distance should not be thought of as an approximate geodesic distance, as is clearly shown in Figure \ref{standardIDM} below.  For small distances, the rescaled diffusion distance and the geodesic distance also agree very closely with the Euclidean distance in any isometric embedding, as was shown in \cite{diffusion}.  In fact, the diffusion map provides a canonical embedding of the manifold $\mathcal{M}$ up to a rotation in the following sense: For any isometric image $\mathcal{N} = \iota(\mathcal{M})$ of $\mathcal{M}$ where $\iota$ is an isometry, the diffusion map embeddings of $\mathcal{N}$ and $\mathcal{M}$ with the same parameter $t$ will differ by at most an orthogonal linear map.  This is because the eigenfunctions of the Laplacian depend only on the geometry of the manifold, which is preserved by an isometric map, and for the eigenfunctions corresponding to repeated eigenvalues may differ only by an orthogonal transfomation.  Moreover, the fact that the rescaled diffusion map preserves small geodesic distances implies that the rescaled diffusion map is approximately an isometric embedding (this was shown previously in \cite{DMembed} which provides more detailed bounds).  In particular, this implies that, for $t$ small and $M$ large, the Laplace-Beltrami operator on $\Phi_t(\mathcal{M})$ is very close to the Laplace-Beltrami operator on $\mathcal{M}$.  One consequence of this fact is that if we iterate the rescaled diffusion map, the results should not change (up to a rotation).  

 \begin{figure}[h]
  \begin{center}
\includegraphics[width=.32\linewidth]{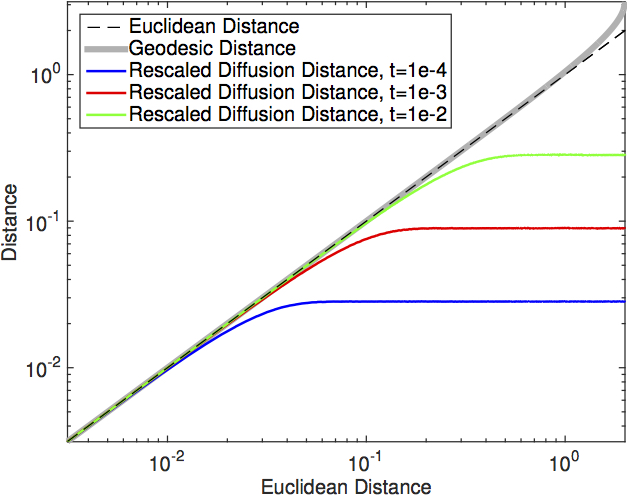}
 \includegraphics[width=.33\linewidth]{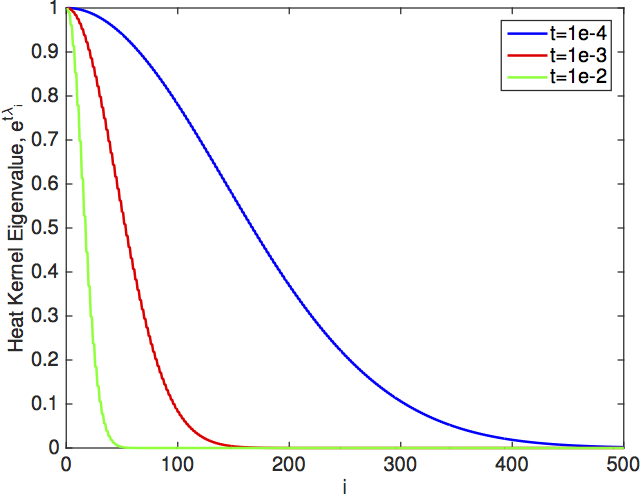}
 \includegraphics[width=.33\linewidth]{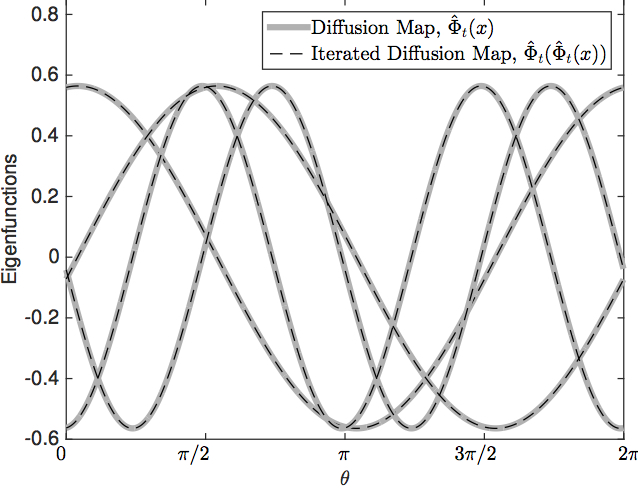}
\caption{\label{standardIDM} For 2000 data points equally spaced on a unit circle in $\mathbb{R}^2$, we compare the Euclidean distance, geodesic distance, and rescaled diffusion distances for $t \in \{10^{-4},10^{-3},10^{-2}\}$ (left).  We also show the spectra of the heat kernel $e^{t\lambda_i}$ for the corresponding values of $t$ (middle) and the results of iterating the standard diffusion map compared to the original diffusion map eigenfunctions (right).}
\end{center}
\end{figure}

To demonstrate these facts numerically, we generated $N=2000$ points $\{x_j\}_{j=1}^{N}$ equally spaced on a unit circle in $\mathbb{R}^2$.  We applied the diffusion maps algorithm to estimate the eigenvalues $\lambda_i$ and eigenfunctions $\varphi_i(x_j)$ of the Laplace-Beltrami operator on the unit circle.  We should emphasize that it is crucial to correctly normalize the eigenfunctions $\varphi_i$ using a kernel density estimate $q(x_j)$, that is, we require, 
\[ 1 = \frac{1}{N}\sum_{j=1}^N \frac{\varphi_i(x_j)^2}{q(x_j)} \approx \int_{\mathcal{M}} \varphi_i(x)^2 dV(x), \]
where $dV$ is the volume form on $\mathcal{M}$ inherited from the ambient space.  See \cite{BH14VB} for details on the Monte-Carlo integral above and a natural density estimate implicit to the diffusion maps construction.  We then evaluated the rescaled diffusion map $\hat \Phi_t(x_j)$ for $t \in \{10^{-4},10^{-3},10^{-2}\}$ and compared the resulting diffusion distances $\hat D_t(x_i,x_j) \approx ||\hat \Phi_t(x_i)-\hat\Phi_t(x_j)||$ to the Euclidean distances $||x_i-x_j||$ and the geodesic distances $d_g(x_i,x_j)$ in Figure \ref{standardIDM}.  Notice that for distances less than $t^{1/2}$ all the distances agree as shown above.  We also show the spectra of the heat kernels, $e^{t\lambda_i}$, which are the weights of the various eigenfunctions in the diffusion map embedding.  Notice that for $t$ large, the spectrum decays much faster, so fewer eigenfunctions are required for the diffusion distance to be well approximated by the Euclidean distance in the diffusion mapped coordinates.  Finally, for $t=10^{-2}$, we performed an `iterated' diffusion map, by computing the (rescaled) diffusion map of the data set $\hat x_j \equiv \hat\Phi_t(x_j)$, in effect finding $\hat\Phi_t(\hat\Phi_t(x))$.  We then compared the eigenfunctions $\varphi_i(x_j)$ from the first diffusion map with those $\hat\varphi_i(\hat x_j) = \hat\varphi_i(\hat\Phi_t(x))$.  Due to the symmetry of the unit circle, the eigenfunctions corresponding to repeated eigenvalues differed by an orthogonal linear map (meaning a phase shift in this case).  After removing the phase shift, the eigenfunctions are compared in Figure \ref{standardIDM}.  

Since the diffusion map $\Phi_t$ differs from the rescaled diffusion map $\hat\Phi_t$ by a scalar factor, the eigenfunction from iterating the standard diffusion map will also agree.  The only purpose of the rescaled diffusion map $\hat\Phi_t$ is to exactly recover the local distances in the data set, and thereby to also find the same eigenvalues (since rescaling the manifold will change the spectrum of the Laplacian).  Finally, if the standard diffusion map is used, the nuisance parameter $\epsilon$ will have to be retuned in order to iterate the diffusion map, since the diffusion distances will be scaled differently than the original distances.  We emphasize that the goal of this section is to show that iterating the standard diffusion map algorithm is not a useful method.  However, in \cite{LK} it was shown that a generalization of diffusion maps to \emph{local kernels} can be used to construct the Laplace-Beltrami operator with respect to a different metric.  In the remainder of the paper we will see that when the new metric is induced by a feature map on the data set, iterating the diffusion map has a nontrivial effect which can be beneficial. 

\subsection{Local Kernels and the Pullback Geometry}\label{section22}

The connection between kernel functions and geometry was introduced by Belkin and Niyogi in \cite{BN} and generalized by Coifman and Lafon in \cite{diffusion}.  Assuming that a data set $\{x_i\}$ is sampled from a density $p(x)$ supported on a $d$-dimensional manifold $\mathcal{M}\subset \mathbb{R}^m$, summing a function $\sum_i f(x_i)$ approximates a the integral $\int_{\mathcal{M}} f(y)p(y)\, dV(y)$ where $V(y)$ is the volume form on $\mathcal{M}$ inherited from the ambient space $\mathbb{R}^m$.  The central insight of \cite{BN,diffusion} is that by choosing a kernel function $K(x_i,x_j)$ which has exponential decay, the integral $\sum_{j}K(x_i,x_j)f(x_j) \approx \int_{\mathcal{M}} K(x_i,y)f(y)p(y)\, dV(y)$ is localized to the tangent space $T_{x_i}\mathcal{M}$ of the manifold.  

The theory of \cite{BN,diffusion} was recently generalized in \cite{LK} to a wide class of kernels called \emph{local kernels} which are assumed only to have decay that can be bounded above by an exponentially decaying function of distance.  The results of \cite{LK} generalized an early result of \cite{SingerAnisotropy2008} to a much wider class of kernels and connected these early results to their natural geometric interpretations.  In this paper we will use the following prototypical example of a local kernel, since it was shown in \cite{LK} that every operator which can be obtained with a local kernel can also be obtained with a prototypical kernel.  Let $C(x)$ be a matrix valued function on the manifold $\mathcal{M}$ such that each $C(x)$ is a symmetric positive definite $m\times m$ matrix.  Define the prototypical kernel with covariance $C$ (and first moment of zero) by
\begin{align}\label{PK} K(\epsilon,x,y) = \exp\left(-\frac{(x-y)^T C(x)^{-1}(x-y)}{2\epsilon}\right). \end{align}
The theory of local kernels \cite{LK} uses a method closely related to the method of Diffusion Maps of \cite{diffusion} to construct matrices $L_{\epsilon}$ and $L^*_{\epsilon}$ which are discrete approximations to the following operators,
\begin{equation} \label{eqScriptL}
 \mathcal{L}f = \frac{1}{2}c_{ij}\nabla_i \nabla_j f  \hspace{40pt} \mathcal{L}^*f = \frac{1}{2} \nabla_j \nabla_i(c_{ij} f),
 \end{equation}
 in the sense that in the limit of large data and as $\epsilon\to 0$ we have $L_{\epsilon}\to \mathcal{L}$ and $L^*_{\epsilon} \to \mathcal{L}^*$.
Notice that the matrix valued function $C(x)$ acts on the ambient space, whereas the tensor $c(x)$ in the limiting operator $\mathcal{L}$ is only defined on the tangent planes of $\mathcal{M}$.  As shown in \cite{LK}, only the projection of $C(x)$ onto the tangent space $T_x\mathcal{M}$ will influence the operator $\mathcal{L}$.  Thus, we introduce the linear map $\mathcal{I}(x):\mathbb{R}^m \to T_x\mathcal{M}$ which acts as the identity on the tangent plane as a subspace of $\mathbb{R}^m$ and sends all vectors originating at $x$ which are orthogonal to $T_x\mathcal{M}$ to zero.  The map $\mathcal{I}$ projects the ambient space onto the tangent space so that $\mathcal{I}(x)$ is a $d\times m$ matrix and we define $c(x) =\mathcal{I}(x)C(x)\mathcal{I}(x)^\top$.

Given data sampled from a $d$-dimensional manifold $\mathcal{M}$ embedded in Euclidean space $\mathbb{R}^m$ the manifold $\mathcal{M}$ naturally inherits a Riemannian metric, $g_{\cal M}$, from the ambient space.  The standard Diffusion Maps algorithm uses an isotropic kernel (where the covariance matrix is a multiple of the identity matrix) to estimate the Laplace-Beltrami operator corresponding to the metric $g_{\cal M}$.  It was shown in \cite{LK} that local kernels such as \eqref{PK} can be used to approximate the Laplace-Beltrami operator corresponding to a new Riemannian metric $\tilde g = c^{-1/2}g_{\cal N}c^{-1/2}$, where $g_{\cal N}$ is a Riemannian metric of $\cal{N}=\mathcal{H}(\mathcal{M})$ for diffeomorphism $\mathcal{H}$ that satisfies $c^{-1}=D\mathcal{H}^\top D\mathcal{H}$. Formally, we summarize this result as follows:
 
\begin{thm}[Pullback geometry of local kernels, with nonuniform sampling]\label{maintheorem} Let $(\mathcal{M}, g_{\mathcal{M}})$ be a Riemannian manifold and let $\{x_i\}_{i=1}^N \subset \mathcal{M}$ be sampled according to any smooth density on $\mathcal{M}$.  Let $\mathcal{H}:\mathcal{M} \to \mathcal{N}$ be a diffeomorphism and let $y_i = \mathcal{H}(x_i)$ and $c(x_i)^{-1} = D\mathcal{H}(x_i)^\top D\mathcal{H}(x_i)$.  For the local kernel $K$ in \eqref{PK}, define the symmetric kernel $\overline K(\epsilon,x,y) = K(\epsilon,x,y) + K(\epsilon,y,x)$.  Then for any smooth function $f$ on $\mathcal{M}$,
\[ \lim_{N\to\infty} \frac{2}{\epsilon}\left(  \frac{{\displaystyle\sum_{j=1}^N} { \overline K(\epsilon,x_i,x_j)f(x_j)}/{\sum_l \overline K(\epsilon,x_j,x_l)}}{{\displaystyle \sum_{j=1}^N} { \overline K(\epsilon,x_i,x_j)}/{\sum_l \overline K(\epsilon,x_j,x_l)}}-f(x_i)\right) = \Delta_{\tilde g}f(x_i) + \mathcal{O}(\epsilon)= \Delta_{g_{\mathcal{N}}} (f\circ \mathcal{H}^{-1})(y_i) + \mathcal{O}(\epsilon)\]
where $\tilde g(u,v) = g_{\mathcal{N}}(D\mathcal{H}u,D\mathcal{H}v)$.
\end{thm}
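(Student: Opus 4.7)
The plan is to reduce the statement to the basic pointwise expansion of a local kernel proved in \cite{LK} and then to carry out the Coifman--Lafon $\alpha=1$ style double normalization, which cancels the sampling density $p$ and inserts the factor $\det(c)^{-1/2}$ that upgrades the density-biased second-order operator $\mathcal{L}$ to the Laplace--Beltrami operator of the pullback metric. The symmetrization $\overline K(\epsilon,x,y) = K(\epsilon,x,y)+K(\epsilon,y,x)$ is needed because $K$ is not jointly symmetric in $(x,y)$ (the covariance $C(x)$ varies with $x$), and it also ensures that the $\mathcal{O}(\epsilon)$ drift terms from the two normalization passes can be lined up to recover $\Delta_{\tilde g}$ rather than merely $\mathcal{L}$ or $\mathcal{L}^{*}$.

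The first step is a pointwise expansion of the inner denominator. From the local-kernel expansion in \cite{LK}, for any smooth density $\rho$,
$$ \epsilon^{-d/2}\int_\mathcal{M} K(\epsilon,x,y)\rho(y)\,dV(y) = (2\pi)^{d/2}\det(c(x))^{1/2}\bigl(\rho(x)+\mathcal{O}(\epsilon)\bigr), $$
so Monte Carlo approximation against the sampling density $p$ gives
$$ \sum_l \overline K(\epsilon,x_j,x_l) = \epsilon^{d/2}\cdot 2(2\pi)^{d/2}\det(c(x_j))^{1/2}\,p(x_j)\bigl(1+\mathcal{O}(\epsilon)\bigr). $$
Substituting this into the theorem's ratio, the $p(x_j)$ factor implicit in the outer Monte Carlo sum cancels the $p(x_j)$ in the inner denominator, leaving an estimator of
$$ \frac{\int_\mathcal{M} \overline K(\epsilon,x_i,y)f(y)\tilde p(y)\,dV(y)}{\int_\mathcal{M} \overline K(\epsilon,x_i,y)\tilde p(y)\,dV(y)}, \qquad \tilde p(y) = \det(c(y))^{-1/2}, $$
up to constants that cancel between numerator and denominator. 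The sampling density has disappeared and has been replaced by the intrinsic density $\tilde p$.

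The second step is to apply the local-kernel expansion once more, now to this density-free ratio. By the analysis of \cite{LK}, $\tfrac{2}{\epsilon}$ times the ratio minus $f(x_i)$ converges to the drift-diffusion generator with diffusion tensor $c(x)$ and drift $c\,\nabla\log\tilde p$ on $T_{x_i}\mathcal{M}$. In local coordinates on $(\mathcal{M},\tilde g)$, with $\tilde g^{ij}=c^{ij}$ (because $c^{-1}=D\mathcal{H}^{\top} D\mathcal{H}$ and $\tilde g = \mathcal{H}^{*} g_\mathcal{N}$), the Riemannian volume density is exactly $\sqrt{\det\tilde g}=\det(c)^{-1/2}=\tilde p$. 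Consequently, the drift coming from $\nabla\log\tilde p$ is precisely the correction that combines with $\tfrac12 c^{ij}\nabla_i\nabla_j$ to assemble the Laplace--Beltrami operator
$$ \Delta_{\tilde g}f = \frac{1}{\sqrt{\det\tilde g}}\,\partial_i\!\bigl(\sqrt{\det\tilde g}\,\tilde g^{ij}\partial_j f\bigr). $$
The equivalence $\Delta_{\tilde g}f(x_i) = \Delta_{g_\mathcal{N}}(f\circ\mathcal{H}^{-1})(y_i)$ is then immediate: by construction $\mathcal{H}$ is an isometry from $(\mathcal{M},\tilde g)$ to $(\mathcal{N},g_\mathcal{N})$, and the Laplace--Beltrami operator commutes with isometric pullback.

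The main obstacle is the bookkeeping in the first normalization pass: one must check that the $\mathcal{O}(\epsilon)$ corrections in the individual expansions of $\sum_l \overline K(\epsilon,x_j,x_l)$ and of the numerator, which involve both $\nabla p$ and $\nabla c$, cancel correctly under the double normalization. The variation of $C(x)$ produces cross terms in $\overline K$ that are not obviously innocuous, and it is only after they combine with the $\nabla p$ terms from nonuniform sampling that the first-order drift collapses into exactly the $\nabla\log\tilde p$ form needed to reassemble the intrinsic Laplacian. Everything else is a routine consequence of the $\alpha=1$ normalization trick already present in \cite{diffusion, LK}.
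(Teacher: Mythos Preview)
The paper does not prove this statement at all: immediately after the theorem it says only that ``Theorem \ref{maintheorem} follows directly from Theorem 4.7 of \cite{LK}.'' Your proposal is a correct outline of what that cited proof actually does --- the local-kernel pointwise expansion, the $\alpha=1$ double normalization that replaces the sampling density $p$ by the intrinsic density $\det(c)^{-1/2}$, and the identification $\tilde g^{ij}=c^{ij}$ that turns the resulting drift-diffusion generator into $\Delta_{\tilde g}$ --- so in substance you are reproducing the argument the paper merely invokes by reference. One small clarification: the role of the symmetrization $\overline K=K+K^{\top}$ is really just to make the kernel a \emph{symmetric} local kernel so that the expansion theorem of \cite{LK} applies and the graph Laplacian is self-adjoint; the recovery of $\Delta_{\tilde g}$ rather than $\mathcal{L}$ or $\mathcal{L}^{*}$ is entirely due to the $\alpha=1$ normalization, not to the symmetrization step.
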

Theorem \ref{maintheorem} follows directly from Theorem 4.7 of \cite{LK}.  This result was used by \cite{LK} to represent a diffeomorphism between two manifolds.  We assume we are given a training data set $x_i \in \mathcal{M}\subset \mathbb{R}^m$ sampled from the data manifold $\mathcal{M}$ along with the true feature values, $y_i = \mathcal{H}(x_i)$, where $y_i$ lie on $\mathcal{N} = \mathcal{H}(\mathcal{M})$.  When $\mathcal{H}$ is a diffeomorphism, we can use a local kernel to pullback the Riemannian metric from $\mathcal{N}$ onto $\mathcal{M}$ via the correspondence between the data sets.  With this metric on $\mathcal{M}$, the two manifolds are isometric, which implies that the Laplacians ($\Delta_{\tilde g}$ on $\mathcal{M}$ and $\Delta_{g_{\mathcal{N}}}$ on $\mathcal{N}$) have the same eigenvalues, and that the associated eigenfunctions of any eigenvalue are related by an orthogonal transformation \cite{laplacianBook}.  

In Section \ref{section3} we will give a rigorous method to approximate $c(x_i)^{-1} = D\mathcal{H}(x_i)^\top D\mathcal{H}(x_i)$ using the training data. With this approximation, numerically we evaluate the local kernel
\begin{align}\label{diffeokernel} K(\epsilon,x_i,x_j) = \exp\left(-\frac{|| D\mathcal{H}(x_i)(x_j-x_i)||^2}{2\epsilon}\right). \end{align}
By Theorem \ref{maintheorem}, using the kernel \eqref{diffeokernel}, we approximate the Laplacian $\Delta_{\tilde g} = {(\mathcal{H}^{-1})}^*\Delta_{g_{\mathcal{N}}}$ on $\mathcal{M}$. Simultaneously, using the standard diffusion maps algorithm (with $\alpha=1$) we approximate the Laplacian $\Delta_{g_{\mathcal{N}}}$ on $\mathcal{N}$.  Since $(\mathcal{M},\tilde g)$ and $(\mathcal{N},g_{\mathcal{N}})$ are isometric, the eigenvalues of $\Delta_{\tilde g}$ and $\Delta_{g_{\mathcal{N}}}$ will be the same and the corresponding eigenfunctions will be related by an orthogonal transformation.  By taking sufficiently many eigenfunctions $\varphi_l$ and $\tilde \varphi_l$ on the respective manifolds, the eigenfunctions can be considered coordinates of an embeddings $\Phi(x) = (\varphi_1(x),...,\varphi_{M}(x))^\top$ and $\tilde \Phi(y) = (\tilde \varphi_1(y),...,\tilde \varphi_{M}(y))^\top$.  We can now project the diffeomorphism $\mathcal{H}$ into these coordinates as,
  \begin{center}
$$  \begin{array}[c]{ccc}
\mathcal{M}&\xrightarrow{\ \ \ \ \ \mathcal{H}\ \ \ \ \ }&\mathcal{N}\\ \\
\left\downarrow\rule{0cm}{.5cm}\right.   \scriptstyle{\Phi}&&\left\downarrow\rule{0cm}{.5cm}\right. \scriptstyle{\tilde\Phi}\\ \\
L^2(\mathcal{M},\tilde g) \approx \mathbb{R}^{M}&\xrightarrow{\ \ \ \ \ H\ \ \ \ \ }&L^2
(\mathcal{N},g_{\mathcal{N}}) \approx \mathbb{R}^{M}
\end{array}$$
\end{center}
where $H = \tilde\Phi \circ \mathcal{H} \circ  \Phi^{-1}$ is linear and can be estimated using linear least squares.  Finally, to extend the diffeomorphism to new data points $x \in \mathcal{M}$ we need only extend the map $\Phi$ to this new data point using standard methods such as the Nystr\"om extension.  

Notice that the key to the existence of the linear map $H$ is that the diffeomorphism $\mathcal{H}$ induces a new metric on $\mathcal{M}$ that is isometric to the metric on $\mathcal{N}$.  In Section \ref{section4} we will make use of this theorem for identifying feature in $\mathcal{M}$ that is relevant to the data in $\mathcal{N}$, even when $\mathcal{H}$ is not a diffeomorphism, but simply a mapping.  However, we will first give rigorous results in Section \ref{section3} for approximating the tangent plane $T_x\mathcal{M}$ and the derivative $D\mathcal{H}$ from data.

\section{Tangent Spaces and Derivatives}\label{section3}

Section \ref{section22} shows that to build a global map $\mathcal{H}:\mathcal{M} \to\mathcal{N} =\mathcal{H}(\mathcal{M})$ between data sets, we need to estimate the local linear maps $D\mathcal{H}(x_i)$ between the tangent spaces $T_{x_i}\mathcal{M}$ and $T_{\mathcal{H}(x_i)}\mathcal{N}$ at each point $x_i \in \mathcal{M}$. Notice that $D\mathcal{H}(x)$ is a $d_{\mathcal{N}} \times d$ matrix, where $d$ is the intrinsic dimension of $\mathcal{M}$ and $d_{\mathcal{N}}$ is the intrinsic dimension of $\mathcal{N}$.  However, it will be more natural to represent $D\mathcal{H}$ as a map between the ambient spaces $\mathbb{R}^m\supset T_x\mathcal{M}$ and $\mathbb{R}^n \supset T_{\mathcal{H}(x)}\mathcal{N}$.  Recall that $\mathcal{I}(x)$, is a $d \times m$ matrix valued function which projects from the ambient space $\mathbb{R}^m$ onto the tangent space $T_x\mathcal{M} \subset \mathbb{R}^m$ such that $\mathcal{I}(x) \mathcal{I}(x)^\top = I_{d\times d}$.  We introduce the notation $\mathcal{I}_{\mathcal{N}}(\mathcal{H}(x))$ for the $d_{\mathcal{N}} \times n$ matrix valued function given by the projection from $\mathbb{R}^n$ onto the tangent space $T_{\mathcal{H}(x)}\mathcal{N} \subset \mathbb{R}^n$ such that $\mathcal{I}_{\mathcal{N}}(\mathcal{H}(x))\mathcal{I}_{\mathcal{N}}(\mathcal{H}(x))^\top = I_{d_{\mathcal{N}}\times d_{\mathcal{N}}}$.  With this notation, 
\begin{align}\label{DhatH} D{\hat{\mathcal{H}}}(x) = \mathcal{I}_{\mathcal{N}}(\mathcal{H}(x))^\top D\mathcal{H}(x) \mathcal{I}(x) . \end{align}
In practice we will estimate $D\hat{\mathcal{H}}(x) \in \mathbb{R}^{n\times m}$, however, when used to construct a local kernel as in Section \ref{section22} only $D\mathcal{H}$ will influence the intrinsic geometry defined by the kernel.  

In this section we improve and make rigorous a method originally introduced in \cite{LK} that estimates the local linear maps from data using a weighted regression.  To estimate $D\hat{\mathcal{H}}(x_i)$, we take the nearest neighbors $\{x_j\}$ of $x_i$ and use the correspondence to find $y_i = \mathcal{H}(x_i)$ and the neighbors $y_j = \mathcal{H}(x_j)$.  Note that $y_j$ may not be the nearest neighbors of $y_i$ due to the distortion of the geometry introduced by $\mathcal{H}$; although if $\mathcal{H}$ is a diffeomorphism (as in \cite{LK}) the local distortion will be very small.  In \cite{LK} they construct the weighted vectors
\[ dx_j = \exp\left(-||x_j-x_i||^2/(4\epsilon)\right)(x_j-x_i) \hspace{50pt} dy_j = \exp\left(-||x_j-x_i||^2/(4\epsilon)\right)(y_j-y_i), \] 
and define $D\hat{\mathcal{H}}(x_i)$ to be the matrix which minimizes $\sum_j || dy_j - D\hat{\mathcal{H}}(x_i)dx_j ||^2$.  Intuitively, the exponential weight is used to localize the vectors; otherwise the linear least squares problem would try to preserve the longest vectors $x_j-x_i$, which do not represent the tangent space well.  This method of localization was used in \cite{LK} for estimating $D\hat{\mathcal{H}}(x_i)$, and it is also closely related to a method of determining the tangent space of a manifold which was introduced in \cite{singerWu}.  Using the foundational theory developed in \cite{diffusion} we will now make this method of finding tangent spaces and derivatives rigorous.

\begin{thm}\label{thm1}
Let $x_i$ be samples from $\mathcal{M} \subset \mathbb{R}^m$ with density $p(x)$ and $y_i=\mathcal{H}(x_i)$ where $\mathcal{H}:{\cal M}\rightarrow\mathcal{N} \subset\mathbb{R}^n$. Define $X$ to be a matrix with columns $X_j = D(x)^{-1/2}\exp\left(-\frac{||x_j - x ||^2}{4\epsilon}\right)(x_j-x) = D(x)^{-1/2}dx_j$ and let $Y$ be a matrix with columns $Y_j = D(x)^{-1/2}\exp\left(-\frac{||x_j - x ||^2}{4\epsilon}\right)(y_j-y) = D(x)^{-1/2}dy_j$, where 
\begin{align}
D(x) = \sum_{i=1}^N \exp\left(-\frac{||x_i - x ||^2}{2\epsilon}\right).\nonumber
\end{align}
Then, 
\begin{align}
\lim_{N\rightarrow\infty}  \frac{1}{\epsilon} Y X^{\top} = D\hat{\mathcal{H}}(x) + \epsilon  R_{\mathcal{H}}(x) + \mathcal{O}(\epsilon^2),
\end{align}
with $D\hat{\mathcal{H}}(x)$ as in \eqref{DhatH} and $R_{\mathcal{H}}(x)\in\mathbb{R}^{n\times m}$. 
\end{thm}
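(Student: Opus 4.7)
The plan is to view $\frac{1}{\epsilon} YX^\top$ as a normalized Monte Carlo approximation to a Gaussian-weighted integral on $\mathcal{M}$ and then apply the asymptotic expansion in $\epsilon$ developed in \cite{diffusion,LK}. First, since $X_j$ and $Y_j$ share the scalar factor $D(x)^{-1/2}\exp(-\|x_j-x\|^2/(4\epsilon))$, the outer product collapses to
\[
YX^\top = \frac{1}{D(x)}\sum_{j=1}^{N} \exp\!\left(-\frac{\|x_j-x\|^2}{2\epsilon}\right)(y_j-y)(x_j-x)^\top,
\]
and after dividing numerator and denominator by $N$ and invoking the law of large numbers, one obtains, as $N\to\infty$,
\[
\frac{1}{\epsilon}YX^\top \to \frac{1}{\epsilon}\,\frac{\int_{\mathcal{M}} e^{-\|z-x\|^2/(2\epsilon)}(\mathcal{H}(z)-\mathcal{H}(x))(z-x)^\top p(z)\,dV(z)}{\int_{\mathcal{M}} e^{-\|z-x\|^2/(2\epsilon)} p(z)\,dV(z)}.
\]

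To expand these integrals in $\epsilon$, I would work in geodesic normal coordinates $u\in\mathbb{R}^d$ centered at $x$, in which
\[
z-x = \mathcal{I}(x)^\top u + \tfrac{1}{2}\Pi_{\mathcal{M}}(u,u) + \mathcal{O}(|u|^3), \qquad \mathcal{H}(z)-\mathcal{H}(x) = \mathcal{I}_{\mathcal{N}}(\mathcal{H}(x))^\top D\mathcal{H}(x)\,u + \mathcal{O}(|u|^2),
\]
with $\Pi_{\mathcal{M}}$ the second fundamental form of $\mathcal{M}\subset\mathbb{R}^m$, $\|z-x\|^2 = |u|^2 + \mathcal{O}(|u|^4)$ (since the embedding is isometric at $x$ and the second fundamental form is normal to $T_x\mathcal{M}$), and $dV = (1+\mathcal{O}(|u|^2))\,du$. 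The leading-order integrand is then $p(x)\,\mathcal{I}_{\mathcal{N}}(\mathcal{H}(x))^\top D\mathcal{H}(x)\,uu^\top\,\mathcal{I}(x)$, and the Gaussian second-moment identity $\int uu^\top e^{-|u|^2/(2\epsilon)}du = \epsilon(2\pi\epsilon)^{d/2} I_d$ produces $\epsilon (2\pi\epsilon)^{d/2} p(x)\,D\hat{\mathcal{H}}(x)$ upon recognizing \eqref{DhatH}. The denominator satisfies the standard kernel density estimate expansion $N^{-1}D(x) = (2\pi\epsilon)^{d/2} p(x)(1+\mathcal{O}(\epsilon))$, so dividing and multiplying by $1/\epsilon$ yields $D\hat{\mathcal{H}}(x)$ at leading order.

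To capture the correction $\epsilon R_{\mathcal{H}}(x)$, I would continue the Taylor expansion two more orders and collect contributions from (i) the Hessian of $\mathcal{H}$ along $\mathcal{M}$, (ii) the second fundamental forms of $\mathcal{M}\subset\mathbb{R}^m$ and $\mathcal{N}\subset\mathbb{R}^n$, (iii) the gradient and Laplacian of $p$, (iv) the quadratic volume-form correction in normal coordinates, and (v) the quartic correction to $\|z-x\|^2 - |u|^2$. Odd moments in $u$ vanish against the centered Gaussian, so only even-order corrections survive and each carries an extra factor of $\epsilon$, giving an order-$\epsilon^{d/2+2}$ contribution in the numerator. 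Expanding the denominator geometrically as $\bigl((2\pi\epsilon)^{d/2}p(x)\bigr)^{-1}(1 - \mathcal{O}(\epsilon) + \cdots)$ and combining with the numerator corrections collapses everything into a single $\epsilon$-independent $n\times m$ matrix $R_{\mathcal{H}}(x)$, with the next-order remainder controlled by $\mathcal{O}(\epsilon^2)$.

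The main obstacle is the bookkeeping: $R_{\mathcal{H}}(x)$ draws on several distinct geometric ingredients (extrinsic curvature of both manifolds, intrinsic Hessian of $\mathcal{H}$, and density variations) which must be tracked consistently through both the numerator and the denominator expansions. However, the pattern is identical to the diffusion-maps Laplacian derivation in \cite{diffusion} and its local-kernel generalization in \cite{LK}, so beyond normal-coordinate Taylor expansion, vanishing of odd Gaussian moments, and a geometric-series inversion of the denominator, no new analytical machinery is required; the theorem follows once the surviving even-order terms at $\mathcal{O}(\epsilon)$ are grouped into the definition of $R_{\mathcal{H}}(x)$.
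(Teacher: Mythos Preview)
Your proposal is correct and follows essentially the same route as the paper: pass from the sum to a ratio of integrals by Monte Carlo, localize to the tangent plane in normal coordinates, Taylor-expand both factors, and exploit the vanishing of odd Gaussian moments to isolate the leading $D\hat{\mathcal{H}}$ term and the order-$\epsilon$ correction. The only cosmetic difference is that the paper organizes the computation by testing the bilinear form $w^\top YX^\top v$ separately for $v$ tangent versus normal to $T_x\mathcal{M}$ (and $w$ tangent versus normal to $T_{\mathcal{H}(x)}\mathcal{N}$), which directly yields the explicit normal-block formula for $R_{\mathcal{H}}$ as the Hessian of $\mathcal{H}$ paired with the second fundamental form of $\mathcal{M}$---the piece that drives the subsequent singular-value scaling corollary.
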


\begin{proof}
Following Appendix B of \cite{diffusion}, let $x,y \in \mathcal{M}$ with $||y-x|| < \sqrt{\epsilon}$ with $\epsilon$ sufficiently small so that there is a unique geodesic $\gamma:[0,s] \to \mathcal{M}$ with $\gamma(0)=x$ and $\gamma(s)=y$.  Let $\{e_i\}$ be a basis for the tangent space $T_x\mathcal{M}$ and define the projection of the geodesic onto the tangent plane by $u_i = \left<y-x,e_i \right> = \left<\gamma(s)-\gamma(0),e_i \right>$.  Locally, we can parameterize the manifold using a function $q:T_x\mathcal{M} \to T_x\mathcal{M}^{\perp}$ so that $y-x = (u,q(u))$.  We now use the Taylor expansion $\gamma(s) = \gamma(0) + s\gamma'(0) + s^2 \gamma''(0)/2 + \mathcal{O}(s^3)$, where $\gamma'(0) \in T_x\mathcal{M}$ and $\gamma''(0)$ is orthogonal to the tangent space.  Combining the previous lines yields, 
\[ (u,q(u)) = y-x = \gamma(s)-\gamma(0) = s\gamma'(0) + s^2 \gamma''(0)/2 + \mathcal{O}(\epsilon^{3/2})  \]
which implies that $u = s\gamma'(0) + \mathcal{O}(\epsilon^{3/2})$ and $q(u) = s^2 \gamma''(0)/2 + \mathcal{O}(\epsilon^{3/2})$.  From Equation (B.2) in \cite{diffusion}, we have $||y-x||^2 = ||u||^2 + \mathcal{O}(\epsilon^{2})$.  For $v \in T_{x}\mathcal{M}$ and $w \in T_x\mathcal{M}^{\perp}$ we have,
\[ \left<y-x,v\right> = s \left<\gamma'(0),v \right> + \mathcal{O}(\epsilon^{3/2}) \hspace{30pt} \left<y-x,w\right> = s^2/2 \left<\gamma''(0),w \right> + \mathcal{O}(\epsilon^{3/2}) \]
This shows that taking the inner product with vectors $y-x$ in the $\sqrt{\epsilon}$ neighborhood of $x$, vectors in the tangent space are of order-$\sqrt{\epsilon}$ and vectors in the orthogonal complement are of order-$\epsilon$.  

Let $\{x_i\}$ be discrete data points sampled from $\mathcal{M}$.  Recall from \cite{diffusion} we have,
\begin{align}\label{sumscaling} \lim_{N\to\infty}\frac{1}{N}D(x) &\equiv \lim_{N\rightarrow\infty}\frac{1}{N}\sum_{i=1}^N \exp\left(-\frac{||x_i - x ||^2}{2\epsilon}\right) =  \int_{\mathcal{M}} \exp\left(-\frac{||y - x ||^2}{2\epsilon}\right) p(y) \, dV(y)\nonumber \\ &= \int_{T_x\mathcal{M}} \exp\left(-\frac{||u||^2}{2\epsilon}\right)p(x) (1+ \mathcal{O}(\epsilon)) \, du 
= (2\pi\epsilon)^{d/2}p(x) + \mathcal{O}(\epsilon^{d/2 + 1}), 
\end{align}
where the continuous integral is a result of taking Monte-Carlo limit over data sampled from the sampling density $p(y)$ with respect to the volume form $dV$ that $\mathcal{M}$ inherits from the ambient space. The restriction of the integral to the tangent plane $T_x\mathcal{M}$ was shown in \cite{diffusion} and follows from the exponential decay of the integrand and we also use the fact from \cite{diffusion} that $dV(y) = (1+\mathcal{O}(\epsilon))du$.  Finally, the change of variables in \eqref{sumscaling} drops all the odd order terms due to the symmetry of the kernel.

Recall that $X$ was the matrix with columns $X_j = D(x)^{-1/2}\exp\left(-\frac{||x_j - x ||^2}{4\epsilon}\right)(x_j-x) = D(x)^{-1/2}dx_j$ and $Y$ is the matrix with columns 
$Y_j = D(x)^{-1/2}\exp\left(-\frac{||x_j - x ||^2}{4\epsilon}\right)(y_j-y) = D(x)^{-1/2}dy_j$. For any vectors $v \in \mathbb{R}^m$ and $w \in \mathbb{R}^n$ we have,
\begin{align}\label{DH1} 
\lim_{N\rightarrow\infty} w^\top Y X^{\top}v &= \lim_{N\rightarrow\infty} D(x)^{-1}\sum_{j=1}^N \exp\left(-\frac{||x_j - x ||^2}{2\epsilon}\right)  \left<\mathcal{H}(x_j)-\mathcal{H}(x),w\right>\left<x_j-x,v \right> \nonumber \\
&= \lim_{N\rightarrow\infty} \left(\frac{D(x)}{N}\right)^{-1}\frac{1}{N}\sum_{j=1}^N \exp\left(-\frac{||x_j - x ||^2}{2\epsilon}\right)  \left<\mathcal{H}(x_j)-\mathcal{H}(x),w\right>\left<x_j-x,v \right> \nonumber \\
&= (2\pi\epsilon)^{-d/2}p(x)^{-1}(1+\mathcal{O}(\epsilon))\int_{\mathcal{M}} \exp\left(-\frac{||y - x ||^2}{2\epsilon}\right)  \left<\mathcal{H}(y)-\mathcal{H}(x),w\right>\left<y-x,v \right> p(y) \, dV(y) \nonumber \\
&= (2\pi\epsilon)^{-d/2}\int_{T_x\mathcal{M}} \exp\left(-\frac{||u||^2}{2\epsilon}\right)  \left<D\mathcal{H}(x) u + \frac{1}{2}u^\top H({\cal H})(x)u+\mathcal{O}(\epsilon^2),w\right> \left(\left<u,v\right> + \left<q(u),v\right>\right) (1+\mathcal{O}(\epsilon)) \, du
\end{align}
where $H(\cdot)$ is the Hessian operator and the last equality follows from using the exponential decay of the integrand to restrict the integral to the tangent plane (see \cite{diffusion} for details).
For $w \in T_{\mathcal{H}(x)}\mathcal{H}(\mathcal{M})$ and $v \in T_x\mathcal{M}$ we reduce \eqref{DH1} to,
\begin{align}\label{dfpar} 
\lim_{N\rightarrow\infty} w^\top Y X^{\top}v &=  (2\pi\epsilon)^{-d/2}\int_{T_x\mathcal{M}} \exp\left(-\frac{||u||^2}{2\epsilon}\right) \sum_{i,j,k} D\mathcal{H}(x)_{ij}u_j w_i u_k v_k \, du + \mathcal{O}(\epsilon^{2}) = \epsilon \sum_{i,j} D\mathcal{H}(x)_{ij}w_i v_j + \mathcal{O}(\epsilon^{2}) \nonumber \\ 
&= \epsilon w^\top D\mathcal{H}(x) v + \mathcal{O}(\epsilon^{2}) \end{align}
On the other hand, for $w \in \mathbb{R}^n$ and $v \in T_x\mathcal{M}^\perp$ we reduce \eqref{DH1} to,
\begin{align} \lim_{N\rightarrow\infty} w^\top Y X^{\top}v &=  (2\pi\epsilon)^{-d/2}\int_{T_x\mathcal{M}}\frac{1}{2}\exp\left(-\frac{||u||^2}{2\epsilon}\right) \sum_{i,j,k,l}  [H({\cal H}_l)(x)]_{ij}u_iu_j w_l q_k(u) v_k (1+ \mathcal{O}(\epsilon)) \, du  \nonumber \\
&=  (2\pi\epsilon)^{-d/2}\int_{T_x\mathcal{M}} \frac{1}{4}\exp\left(-\frac{||u||^2}{2\epsilon}\right)  \sum_{i,j,k,l,a,b}  [H({\cal H}_l)(x)]_{ij}u_iu_ju_au_b w_l [H(q_k)(0)]_{ab} v_k\, du + \mathcal{O}(\epsilon^{3}) \nonumber \\   
&= \epsilon^2 \sum_{k,l} v_k w_l R_{\mathcal{H}}(x)_{lk}+ \mathcal{O}(\epsilon^{3}) = \epsilon^2 w^\top R_{\mathcal{H}}(x) v + \mathcal{O}(\epsilon^{3}),\label{dfortho} \end{align}
where we have used the expansion $q_k(u) = u^\top H(q_k)(0) u$ and we define
\begin{align}
R_{\mathcal{H}}(x)_{lk} = \frac{1}{4}\Big(\sum_{i,j}[H({\cal H}_l)(x)]_{ii}[H(q_k)(0)]_{jj} + [H({\cal H}_l)(x)]_{ij}[H(q_k)(0)]_{ij} + [H({\cal H}_l)(x)]_{ij}[H(q_k)(0)]_{ji}\Big).\label{RH}
\end{align}
Finally, it is easy to see that for  $w \in T_{\mathcal{H}(x)}\mathcal{H}(\mathcal{M})^\perp$ and $v \in T_x\mathcal{M}$ all the terms will be polynomials of degree 3 in the coordinates of $u$, and since these terms are all odd, by the symmetry of the domain of integration we have $\lim_{N\rightarrow\infty}  w^\top YX^\top v = \mathcal{O}(\epsilon^{3})$.  Together with \eqref{dfpar} and \eqref{dfortho}, the proof is complete.
\end{proof}  

We note that the above proof can easily be generalized on kernels of the form $K(\epsilon,x,y) = h\left(\frac{||y-x||^2}{\epsilon}\right)$ for $h:[0,\infty) \to [0,\infty)$ having exponential decay by following \cite{diffusion}. In the remainder of this section, we will discuss the consequences of this result in more details. In particular, we shall see that the scaling law established in this theorem provides systematic methods to identify tangent spaces, estimate derivavtive $D\mathcal{H}$, as well as to estimate the kernel bandwidth parameter $\epsilon$, which is crucial for accurate numerical approximation.

\subsection{Identifying Tangent Spaces with the Singular Value Decomposition}\label{svd}

The first method of leveraging Theorem \ref{thm1} is with the singular value decomposition (SVD).  Intuitively, the singular vectors will naturally be sorted into tangent vectors, with singular values of order $\sqrt{\epsilon}$, and orthogonal vectors, with singular values of order $\epsilon$.  To see this we state the following corollary to Theorem \ref{thm1}.

\begin{cor}\label{cor1}
Let $x_i$ be samples from ${\cal M}\subset{R}^m$ with density $p(x)$. Define $X$ to be a matrix with columns $X_j = D(x)^{-1/2}\exp\left(-\frac{||x_j - x ||^2}{4\epsilon}\right)(x_j-x)= D(x)^{-1/2}dx_j$, where $D(x)$ is defined as in Theorem~\ref{thm1}. Then, 
\begin{align}
\lim_{N\rightarrow\infty}  \frac{1}{\epsilon} XX^{\top} = \mathcal{I}(x)^{\top}\mathcal{I}(x) + \epsilon  R_{\mathcal{I}}(x) + \mathcal{O}(\epsilon^2),\label{limitcor}
\end{align}
where $R_{I}(x)\in\mathbb{R}^{m\times m}$. 
\end{cor}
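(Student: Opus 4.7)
The plan is to derive Corollary \ref{cor1} as an immediate specialization of Theorem \ref{thm1} to the identity feature map $\mathcal{H} = \mathrm{id}_\mathcal{M}$. I would set $\mathcal{N} = \mathcal{M}$, $n = m$, and $\mathcal{H}(x) = x$, so that $y_j - y = x_j - x$ and hence $Y = X$. Then $YX^\top = XX^\top$, and the right-hand side of the conclusion of Theorem \ref{thm1} directly yields the sought expansion $\frac{1}{\epsilon}XX^\top = D\hat{\mathcal{H}}(x) + \epsilon R_{\mathcal{H}}(x) + \mathcal{O}(\epsilon^2)$. What remains is just to identify the two leading coefficients in this setting.

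For the leading term, since $D\mathcal{H}(x):T_x\mathcal{M}\to T_x\mathcal{M}$ is the identity $I_{d\times d}$ and $\mathcal{I}_\mathcal{N}(\mathcal{H}(x)) = \mathcal{I}(x)$ because $\mathcal{N} = \mathcal{M}$, definition \eqref{DhatH} gives $D\hat{\mathcal{H}}(x) = \mathcal{I}(x)^\top I_{d\times d}\,\mathcal{I}(x) = \mathcal{I}(x)^\top \mathcal{I}(x)$, which is precisely the $m\times m$ orthogonal projector from the ambient $\mathbb{R}^m$ onto the tangent space $T_x\mathcal{M}$. The correction term is then obtained from \eqref{RH} by specializing $\mathcal{H}_l$ to the $l$-th ambient coordinate function restricted to $\mathcal{M}$; only the orthogonal component of its Hessian is nonzero, and it is exactly the curvature data already encoded in the local parameterization $q$. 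Calling the resulting symmetric $m\times m$ matrix $R_\mathcal{I}(x)$ yields the claim.

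The only bookkeeping worth spelling out is that the block-wise derivation in \eqref{dfpar}-\eqref{dfortho} covers all four combinations of $v,w$ tangent/orthogonal in the present setting. The tangent-tangent block gives $\frac{1}{\epsilon}w^\top XX^\top v = w^\top \mathcal{I}(x)^\top \mathcal{I}(x) v + \mathcal{O}(\epsilon)$ directly from \eqref{dfpar}; the tangent-orthogonal block gives the $\mathcal{O}(\epsilon)$ contribution from \eqref{dfortho}; the orthogonal-tangent block follows by symmetry of $XX^\top$ (equivalently, by the parity-in-$u$ argument used at the end of the theorem); and the orthogonal-orthogonal block is also $\mathcal{O}(\epsilon)$ after rescaling, because each factor $\langle x_j-x,v\rangle$ and $\langle x_j-x,w\rangle$ is order $\epsilon$ in the orthogonal direction, producing an $\mathcal{O}(\epsilon^2)$ contribution to $XX^\top$ which is absorbed into $\epsilon R_\mathcal{I}(x)$. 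There is no substantive obstacle: the entire content of Corollary \ref{cor1} is contained in Theorem \ref{thm1}, and the only care needed is to correctly collect the orthogonal-orthogonal block into $R_\mathcal{I}(x)$ so that the expansion holds as a full $m\times m$ matrix identity rather than merely blockwise.
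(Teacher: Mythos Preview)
Your proposal is correct and follows the same route as the paper: specialize Theorem~\ref{thm1} to $\mathcal{H}=\mathrm{id}$ so that $Y=X$, identify $D\hat{\mathcal{H}}(x)=\mathcal{I}(x)^\top\mathcal{I}(x)$, and read off the correction term from \eqref{RH}. The paper's proof is slightly terser but adds an explicit formula for $R_{\mathcal{I}}(x)$ showing it is supported on $T_x\mathcal{M}^\perp$; your block-by-block bookkeeping is consistent with this and amounts to the same observation.
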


\begin{proof}
The proof follows from Theorem~\ref{thm1} with ${\cal H}(x)=x$ so that $D\mathcal{H}(x) = I_{d \times d}$ and $D\hat{\mathcal{H}}(x) = \mathcal{I}(x)^{\top}D\mathcal{H}(x)\mathcal{I}(x) = \mathcal{I}(x)^{\top}\mathcal{I}(x)$.  Note that the Hessian $H(\mathcal{H})$ in the definition of $R_{\mathcal{H}}$ in \eqref{RH} is with respect to the coordinates $u \in T_x\mathcal{M}$, so in general $R_{\mathcal{I}}$ is not necessarily zero.
In fact, by repeating the argument in the derivation of \eqref{RH} one can show that,
\begin{align}
R_{\mathcal{I}}(x)_{lk} = \frac{1}{4}(\mathcal{I}^\perp(x))^\top\Big(\sum_{i,j}[H(q_l)(0)]_{ii}[H(q_k)(0)]_{jj} + [H(q_l)(0)]_{ij}[H(q_k)(0)]_{ij} + [H(q_l)(0)]_{ij}[H(q_k)(0)]_{ji}\Big)\mathcal{I}^\perp(x),\nonumber
\end{align}
where $\mathcal{I}^\perp(x):\mathbb{R}^m\to T_x\mathcal{M}^\perp$ is a projection operator that is identity in the directions orthogonal to $T_x\mathcal{M}$ and maps all vectors originating at $x$ to zero when they are in $T_x\mathcal{M}$.
\end{proof}

Recall that $\mathcal{I}(x):\mathbb{R}^m \to T_x\mathcal{M}$ is the projection onto the tangent space at $x$ viewed as a subspace of $\mathbb{R}^m$.  Corollary \ref{cor1} suggests that if $v\in T_x\mathcal{M}$, then $\lim_{N\to\infty}v^\top XX^\top v = \epsilon ||v||^2 + \mathcal{O}(\epsilon^2)$, whereas for $v \in T_x\mathcal{M}^{\perp}$ we find $\lim_{N\to\infty}v^\top XX^\top v = \epsilon^2 v^\top R_I(x) v + \mathcal{O}(\epsilon^3) = \mathcal{O}(\epsilon^2 ||v||^2)$.
This shows that if $v$ is a singular vector, the associated singular value, 
\[ \sigma_v = \lim_{N\to\infty}\frac{\sqrt{v^\top X X^\top v}}{||v||}, \] 
will either be order-$\sqrt{\epsilon}$ if $v$ is in the tangent space, or order-$\epsilon$ if $v$ is orthogonal to the tangent space.  Since the singular value decomposition of $X$ finds $v$ which maximizes $\sigma_v$, when $\epsilon$ is well tuned the first $d$ singular values will all be order-$\sqrt{\epsilon}$ and the remaining $m-d$ singular values will be order-$\epsilon$.  This fact gives us a way to identify the tangent vectors of the manifold by defining the scaling law, $\alpha_l$, of a singular value, $\sigma_l$, to be the exponential power such that $\sigma_l \propto \epsilon^{\alpha_l}$.  When $\alpha_l \approx 1/2$ then the associated singular vector is a tangent vector and when $\alpha_l \geq 1$ then the associated singular vector is orthogonal to $T_x\mathcal{M}$.  

 \begin{figure}[h]
  \begin{center}
\includegraphics[width=.5\linewidth]{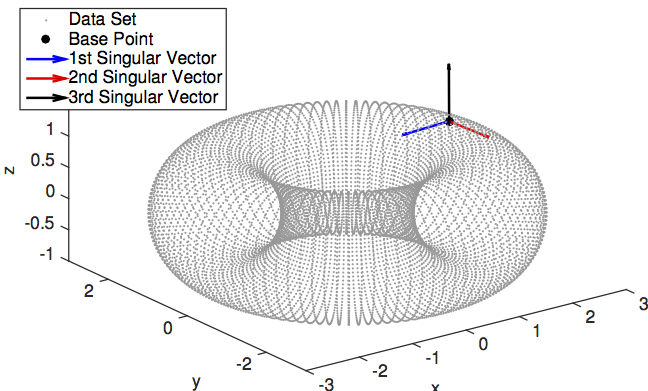}
 \includegraphics[width=.4\linewidth]{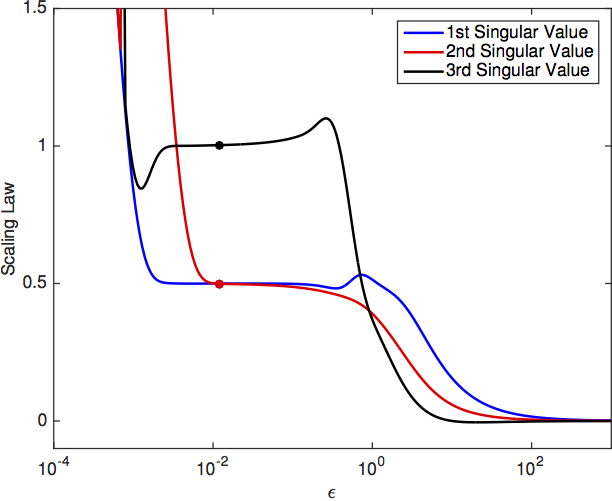} \\
 \includegraphics[width=.5\linewidth]{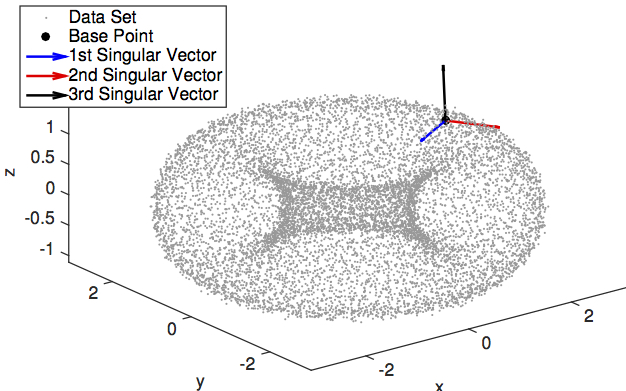}
 \includegraphics[width=.4\linewidth]{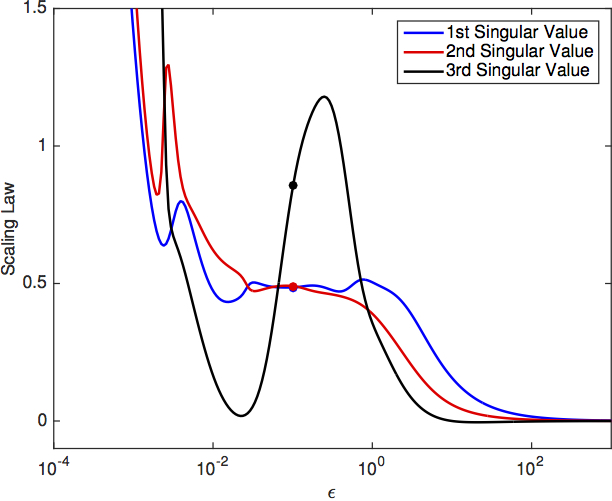}
\caption{\label{svdscaling} Data set sampled from a Torus embedded in $\mathbb{R}^3$ (top) and with noise added (bottom).  Singular vectors are shown (left) that correspond to the optimal choice of $\epsilon$ (shown above with the solid dot in the scaling law curves, see Section \ref{tuning}) based on the empirical scaling laws (right) for the various singular values and the determinant of the weighted vectors $X$ at the base point $(1.996,0.126,1.000)^\top$.}
\end{center}
\end{figure}

For discrete data, this power law will change as a function of the bandwidth parameter $\epsilon$. Numerically, we can estimate this power law by computing $\sigma_l(\epsilon)$ for discrete values $\epsilon_i$ and then approximating,
\[ \alpha_l = \frac{d\log(\sigma_l)}{d\log(\epsilon)} \approx \frac{\log(\sigma_l(\epsilon_i))-\log(\sigma_l(\epsilon_{i-1}))}{\log(\epsilon_i)-\log(\epsilon_{i-1})} \]
We now demonstrate this numerically by sampling 10000 points $(\theta_i,\phi_i) \in [0,2\pi]^2$ from a uniform grid and mapping them onto a torus embedded in $\mathbb{R}^3$ by $(x,y,z)^\top = ((2+\cos(\theta))\cos(\phi),(2+\cos(\theta))\sin(\phi),\sin(\theta))^\top$.  We chose a point $x = (1.996,0.126,1.000)^\top$ and constructed the weighed vectors $X_j = D(x)^{-1/2}\exp\left(-\frac{||x_j - x ||^2}{4\epsilon}\right)(x_j-x)$ for $\epsilon_l = 2^{-13+l/10}$ where $l=1,...,230$.  For each value of $\epsilon_l$ we compute the three singular values of $X_j$ and then we compute the scaling laws for each singular value. These scaling laws are shown in Figure \ref{svdscaling}.  We selected the optimal value of $\epsilon$ using the method that we will describe in Section \ref{tuning}, which are highlighted by a solid dot in the scaling law curves, and we plot the associated singular vectors in Figure \ref{svdscaling}.  

To demonstrate the robustness of this methodology to small noise in the ambient space, we repeated the experiment adding a three dimensional Gaussian random perturbation with mean zero and variance $0.04 I_{3\times 3}$ to each point.  In the noisy case, the theoretical scaling laws are obtained for a much smaller range of values of $\epsilon$ as shown in Figure \ref{svdscaling}.  In fact, when analyzed at a small scale ($\epsilon < 0.15$) all three singular values have scaling law $\alpha_l \approx 1/2$, which represents the three dimensional nature of the manifold after the addition of the noise.  However, the scaling laws also capture the approximate two-dimensional structure, as shown by the scaling law of the third singular vector being very close to $1$ for $0.22 < \epsilon < 0.4$.  This suggests that the scaling laws are robust for perturbations of magnitude less than $\epsilon$, however, the singular vectors are more sensitive as shown by the slight tilt in the tangent plane defined by the first two singular vectors in Figure \ref{svdscaling}.

\subsection{Estimating Derivatives with the Linear Regressions}\label{linreg}

We now return to the problem of estimating the derivative of a nonlinear mapping $\mathcal{H}:\mathcal{M} \subset \mathbb{R}^m \to\mathcal{N} \subset \mathbb{R}^n$ where we assume that we know the values of $\mathcal{H}$ on our training data set $y_i = \mathcal{H}(x_i)$.  As mentioned above, the approach of \cite{LK} was to use a linear regression to estimate $D\hat{\mathcal{H}}(x_i)$ as the matrix which minimizes $\sum_j || dy_j - D\hat{\mathcal{H}}(x_i) dx_j ||^2$.  Using the theory developed in Section \ref{svd} we can now rigorously justify this approach.  Notice that the linear regression minimizes the error $Y \approx D\hat{\mathcal{H}}(x_i)  X$ by setting $D\hat{\mathcal{H}}(x_i) \equiv YX^\top(XX^\top)^{-1}$ (where the additional factor of $D(x)$ from Theorem \ref{thm1} cancels making this equivalent to the approach of \cite{LK}).  

Theorem \ref{thm1} suggested that a simple method of estimating the derivative $ D\hat{\mathcal{H}}(x)$ is with the correlation matrix $\frac{1}{\epsilon}YX^\top$.  Numerically, we found that a better estimate of $D\hat{\mathcal{H}}(x)$ is given by the linear regression $YX^{\top}(XX^{\top})^{-1}$, and we also analyze this construction.  From Corollary \ref{cor1} we have $\lim_{N\to\infty}XX^\top = \epsilon\mathcal{I}(x)^\top\mathcal{I}(x) + \epsilon^2 R_I(x) + \mathcal{O}(\epsilon^3)$, which implies that in the limit of large data,
\[ \lim_{N\to\infty}(XX^\top)^{-1} = \frac{1}{\epsilon}((\mathcal{I}(x)^\top \mathcal{I}(x))^{\dagger} - \epsilon R_I(x) + \mathcal{O}(\epsilon^2)), \]
where $\dagger$ denotes the pseudo-inverse.
 Combining the results of Theorem \ref{thm1} and Corollary \ref{cor1} we have,
\begin{align}\lim_{N\to\infty} YX^\top (XX^\top)^{-1} &= (D\hat{\mathcal{H}}(x) + \epsilon R_{\mathcal{H}}(x) + \mathcal{O}(\epsilon^2))((\mathcal{I}(x)^\top \mathcal{I}(x))^{\dagger} - \epsilon R_I(x) + \mathcal{O}(\epsilon^2)) \nonumber \\
&= D\hat{\mathcal{H}}(x)(\mathcal{I}(x)^\top \mathcal{I}(x))^{\dagger}+ \mathcal{O}(\epsilon) \nonumber \\
&= \mathcal{I}_{\mathcal{N}}(\mathcal{H}(x))^\top D\mathcal{H}(x) \mathcal{I}(x)(\mathcal{I}(x)^\top \mathcal{I}(x))^{\dagger}+ \mathcal{O}(\epsilon) \nonumber \\ 
&= \mathcal{I}_{\mathcal{N}}(\mathcal{H}(x))^\top D\mathcal{H}(x) (\mathcal{I}(x)^\top)^{\dagger}+ \mathcal{O}(\epsilon)  \nonumber
\end{align}
This implies that the regression based estimate of $D\mathcal{H}(x)$ can have large errors in directions orthogonal to the tangent space $T_x\mathcal{M}$.  However, these large errors are not important when $D\mathcal{H}(x)$ is used in constructing a local kernel, since the local kernel construction only depends on 
the projection of $D\mathcal{H}(x)$ onto the tangent space.  The likely reason that the linear regression, $YX^{\top}(XX^{\top})^{-1}$, gives better results than the correlation estimate, $\frac{1}{\epsilon}YX^\top$, is that the errors arising from the approximation of the continuous integrals by the finite summations in $YX^\top$ and $XX^\top$ are correlated, similar to the result found in \cite{SingerEstimate}.  Finally, we note that the columns in $Y$ require the value of $y = \mathcal{H}(x)$, which is assumed to be known in the training data set, but will not be known if we wish to extend the map $D\hat{\mathcal{H}}$ to a new point $x^*$.  However, this can easily be overcome by converting from a linear regression to an affine regression, which will implicitly estimate a weighted linear regression for $\mathcal{H}(x^*)$.

 \begin{figure}[h]
  \begin{center}
\includegraphics[width=.32\linewidth]{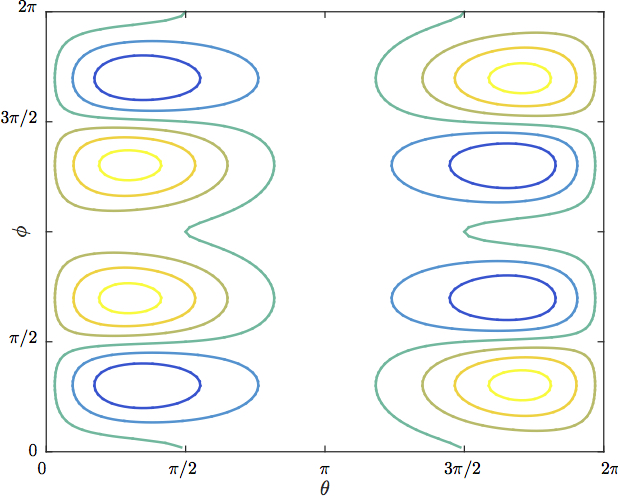}
 \includegraphics[width=.32\linewidth]{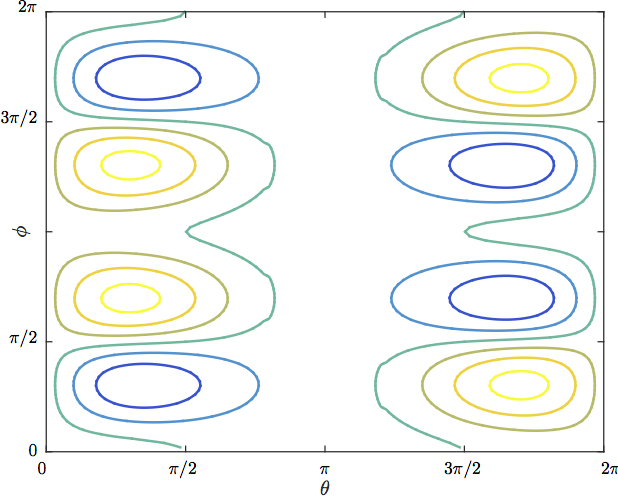}
  \includegraphics[width=.32\linewidth]{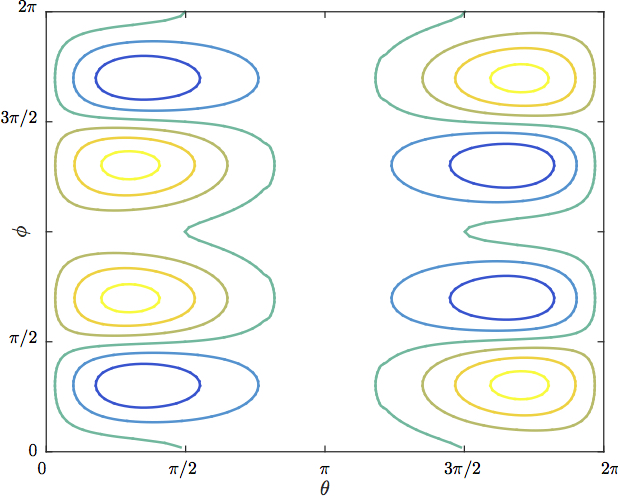} \\
  \includegraphics[width=.32\linewidth]{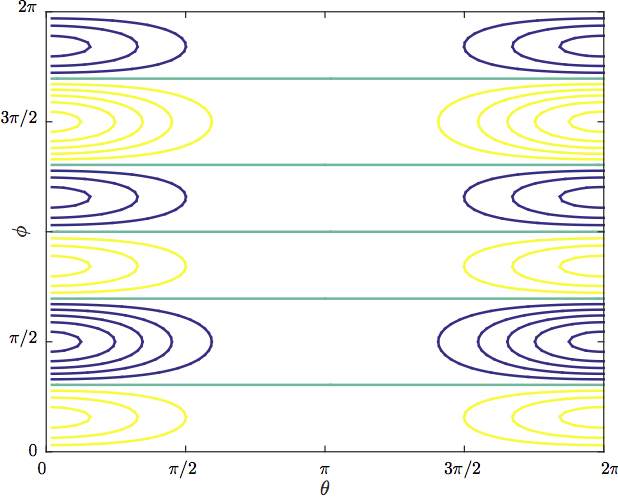}
 \includegraphics[width=.32\linewidth]{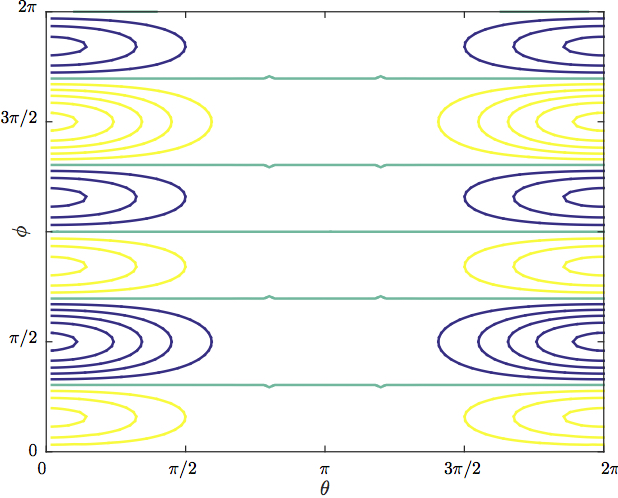}
  \includegraphics[width=.32\linewidth]{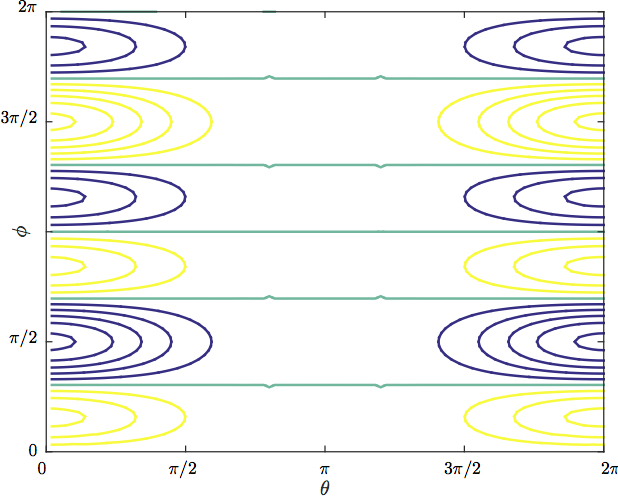} \\
  \includegraphics[width=.32\linewidth]{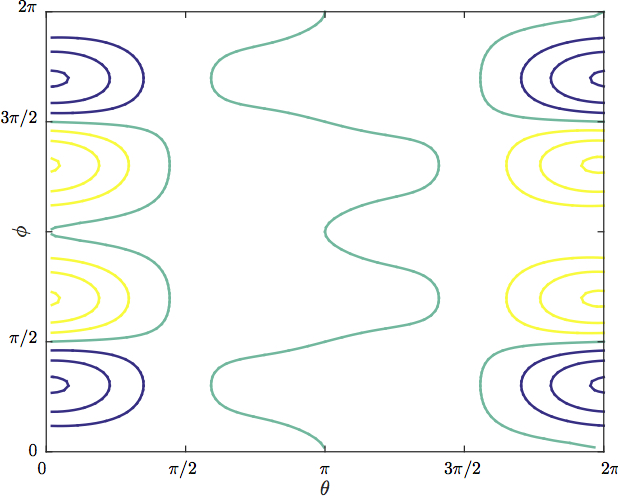}
 \includegraphics[width=.32\linewidth]{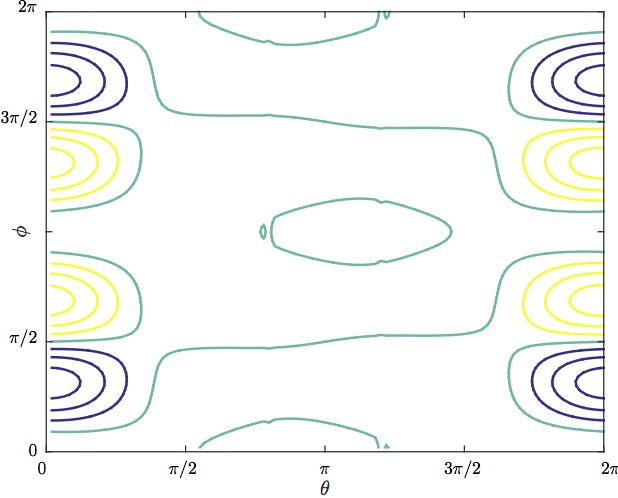}
  \includegraphics[width=.32\linewidth]{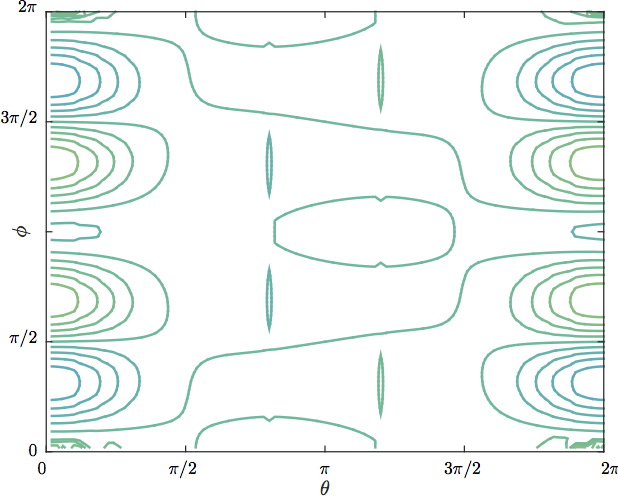}
\caption{\label{derivativeEstimates} Contour plots of derivatives $D\hat{\mathcal{H}}(x,y,z)\frac{d(x,y,z)}{d\theta}$ (top), $D\hat{\mathcal{H}}(x,y,z)\frac{d(x,y,z)}{d\phi}$ (middle), and $D\hat{\mathcal{H}}(x,y,z)\left(\frac{d(x,y,z)}{d\theta}\times \frac{d(x,y,z)}{d\phi}\right)$ (bottom) are shown for the analytical computation (first column), regression estimate (second column) and covariance estimate (third column). }
\end{center}
\end{figure}

We demonstrate this method of estimating derivatives by defining the function, 
\[ \hat{\mathcal{H}}(x,y,z) = xy^2 + z, \]
which restricted to the torus can be written in the coordinates $(\theta,\phi)$ as,
\[ \hat{\mathcal{H}}(x,y,z) = \mathcal{H}(\theta,\phi) = (2+\cos(\theta))^3\cos(\phi)\sin^2(\phi) + \sin(\theta). \] 
We evaluate $\hat{\mathcal{H}}$ on the data set lying exactly on the torus example in Section \ref{svd}. We will evaluate the derivative $D\hat{\mathcal{H}}(x,y,z) = (y^2,x,1)$ by projecting onto the two tangent directions $\frac{d(x,y,z)}{d\theta}$ and $\frac{d(x,y,z)}{d\phi}$ and the orthogonal direction $\frac{d(x,y,z)}{d\theta} \times \frac{d(x,y,z)}{d\phi}$.  In Figure~\ref{derivativeEstimates}, we compare the contour plot of the analytical derivatives (first column) to the corresponding estimates obtained by the linear regression (second column) and the covariance matrix (third column). Notice that the correlation matrix estimate $\frac{1}{\epsilon}YX^\top \approx D\hat{\mathcal{H}}$ is approximately zero when projected in the direction orthogonal to the tangent plane, whereas the linear regression estimate $YX^\top(XX^\top)^{-1}$ recovers the analytic derivative even in this orthogonal direction.  We re-emphasize that when used in a local kernel, the behavior in the orthogonal direction is irrelevant to the limiting operator.

\subsection{Tuning the Local Bandwidth via SVD}\label{tuning}

A significant challenge in applying kernel-based methods such as diffusion maps and local kernels is tuning the bandwidth parameter $\epsilon$.  The algorithms of \cite{diffusion,LK} are based on a global bandwidth parameter, meaning that the same value of $\epsilon$ is used for all data points.  In \cite{coifman2008TuningEpsilon} a method was introduced for tuning the global bandwidth parameter based on the scaling law in \eqref{sumscaling}.  As pointed out in \cite{coifman2008TuningEpsilon}, when $\epsilon$ is well chosen, the kernel $\exp\left(-\frac{||y - x ||^2}{2\epsilon}\right)$ will localize the integral over the whole manifold onto the tangent plane.  This localization is made rigorous up to an error of order-$\epsilon^{3/2}$ in Lemma 8 of \cite{diffusion}.  Thus, when $\epsilon$ is well-tuned we expect to see the scaling law $D(x) \propto \epsilon^{d/2}$.  On the other hand, in the limit as $\epsilon \to 0$ we find $D(x) \approx \frac{1}{N}\sum_{i=1}^N 0 = 0$ and in the limit as $\epsilon \to \infty$ we find $D(x) \approx \frac{1}{N}\sum_{i=1}^N 1 = 1$.  When using a global bandwidth, the approach advocated in \cite{coifman2008TuningEpsilon} was to average $D(x)$ over the dataset, and to choose bandwidth parameter $\epsilon$ so that 
\[ \overline{D}(\epsilon) = \frac{1}{N}\sum_{j=1}^N D(x_j) = \frac{1}{N^2}\sum_{i,j=1}^N \exp\left(-\frac{||y - x ||^2}{2\epsilon}\right) \propto \epsilon^{d/2}. \]  
Of course, this method of tuning the bandwidth parameter requires knowing the intrinsic dimension $d$ of the manifold $\mathcal{M}$.  In  \cite{coifman2008TuningEpsilon} they advocated choosing $\epsilon$ such that $\log(\overline{D}(\epsilon)) \approx \frac{d}{2}\log (\epsilon) + c$ is approximately linear as a function of $\log(\epsilon)$. 

In \cite{BH14VB}, an extension of the method of \cite{coifman2008TuningEpsilon} was advocated that simultaneously determines the bandwidth parameter $\epsilon$ and the intrinsic dimension $d$.  The approach of \cite{BH14VB} is based on the scaling law $S(\epsilon)$ defined by,
\[ S(\epsilon) \equiv \frac{d \log(\overline{D})}{d \log(\epsilon)}, \]   
and noting that when $\epsilon \to 0$ and $\epsilon \to \infty$ we have $S(\epsilon) \to 0$.  We should note that the limit $S(\epsilon)\to 0$ as $\epsilon\to 0$ applies only to the biased estimate $D(x_j)$ where the summation includes $i=j$, meaning that the largest summand is always $1$.  The largest summand being $1$ implies that the other summands will lose numerical significance as $\epsilon\to 0$, meaning $D$ converges to a constant and $S(\epsilon) \to 0$.  If the unbiased summation of $D(x_j)$ were used (for example in a Kernel Density Estimation) then as $\epsilon \to 0$ the summand corresponding to the shortest distance would dominate, so that $D \propto \exp(-c/\epsilon)$ and $S(\epsilon) =  \frac{d \log(D)}{d \log(\epsilon)} = \epsilon \frac{d (-c/\epsilon)}{d\epsilon} \propto \epsilon^{-1}$ in the limit as $\epsilon \to 0$. However, in this paper we restrict our attention to the biased estimate, as required by the diffusion maps and related algorithms, so that as $\epsilon\to 0$ we have $S(\epsilon)\to 0$. This implies that $S(\epsilon)$ has a unique maximum, and in \cite{BH14VB} they chose $\epsilon$ to maximize $S(\epsilon)$ and then set the dimension by, $d = 2S(\epsilon)$.  The approach of \cite{BH14VB} was found to be ineffective for kernels with a global bandwidth parameter, especially when there are large variations in the sizes of local neighborhoods due to the sampling of the data set.  However, the method of \cite{BH14VB} was found to be very robust for a variable bandwidth kernel of the form $\exp\left(-\frac{||y - x ||^2}{\epsilon \rho(x)\rho(y)}\right)$ where the bandwidth function $\rho(x)$ was chosen to be inversely proportional to a power of the sampling density, namely $\rho(x) \propto p(x)^{\beta}$ for $\beta<0$.  

From \eqref{sumscaling}, we should have a scaling law $D(x) \propto \epsilon^{d/2}$ in each local region.  We can now connect this fact to the scaling laws of the singular values shown above.  Recall that $X$ has $d$ singular values equal to $\sigma_l = \epsilon^{1/2} + \mathcal{O}(\epsilon)$, $l=1,...,d$ and the remaining $n-d$ singular values are order-$\epsilon$.  Thus, we have $\textup{trace}(XX^\top) = \sum_l \sigma_l = d\epsilon + \mathcal{O}(\epsilon^2)$ so that $\frac{1}{\epsilon}\textup{trace}(XX^\top) = d + \mathcal{O}(\epsilon)$. Since the trace is independent of the order of multiplication, we can define $\nu=(2\epsilon)^{-1}$ so that $d \log \nu = \frac{d\nu}{\nu} = - \frac{d\epsilon}{\epsilon} = -d\log \epsilon$  and write,
\begin{align} \frac{1}{\epsilon}\textup{trace}(XX^\top) &= 2\nu\,\textup{trace}(X^\top X) = \frac{2\nu}{D(x)} \sum_{i}  \exp\left(-\nu ||x_i-x||^2 \right) ||x_i - x||^2 = \frac{-2\nu}{D(x)}  \sum_{i}  \frac{d}{d\nu}\exp\left(-\nu ||x_i-x||^2\right) \nonumber \\
&=  \frac{-2\nu}{D(x)} \frac{d}{d\nu} \sum_{i}  \exp\left(-2\nu ||x_i-x||^2\right) = \frac{-2\nu}{D(x)} \frac{d D(x)}{d\nu} = 2 \frac{d \log D(x) }{d\log \epsilon }. \nonumber
\end{align}
The previous equation confirms that the scaling law of $D(x)$, given by,
\[ S_1(\epsilon) \equiv \frac{d\log(D(x))}{d\log(\epsilon)} \] 
should be equal to $d/2$, so one method of estimating the dimension for a given value of $\epsilon$ would be, 
\[ d_1(\epsilon) = 2 S_1(\epsilon), \]
and this formula uses the singular values by implicitly taking the trace of the matrix $XX^\top$.  This formula was previously known based on the fact that $D(x) \propto \epsilon^{d/2}$, which comes from the normalization factor for a Gaussian on $T_x\mathcal{M}$.  However, the connection to the sum of the singular values reveals that when the ambient space dimension, $m$, is large, the singular values $\sigma_l$ for $l>d$ can lead to overestimation since, 
\[ d_1(\epsilon) = 2 S_1(\epsilon) = \frac{1}{\epsilon}\textup{trace}(XX^\top) = d + \sum_{l=d+1}^m \sigma_l /\epsilon . \]
Of course, each $\sigma_l$ is order-$\epsilon^2$ for $l>d$, however, when $m$ is large enough, this summation can lead to significant overestimation.  We note that the coefficients of these order-$\epsilon^2$ singular values depend on the curvature of the manifold at the point $x$, and these coefficients can be large for complex geometries.  This shows how the value of $\epsilon$, which maximizes the local scaling law $S_1(\epsilon)$, as suggested in \cite{BH14VB}, can overestimate the dimension.  

 \begin{figure}[h]
  \begin{center}
\includegraphics[width=.45\linewidth]{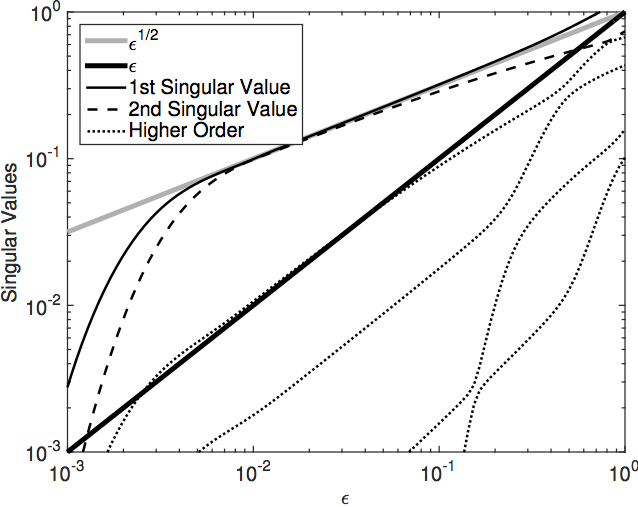}
 \includegraphics[width=.45\linewidth]{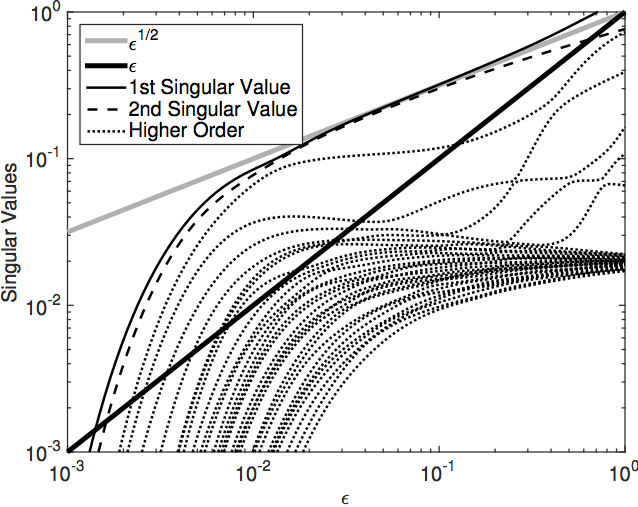}
\caption{\label{singularVals} Singular values as a function of $\epsilon$ for a high curvature embedding of a torus into $\mathbb{R}^{30}$ (left) and the same data set perturbed by 30-dimensional additive Gaussian noise with mean zero and covariance matrix $\frac{1}{50}I_{30\times 30}$ (right).}
\end{center}
\end{figure}

Here, we introduce a new method that combines the ideas of \cite{coifman2008TuningEpsilon,BH14VB} with the local SVD in order to tune $\epsilon$ in each local region and improve approximation of the tangent space.  Recall from Section \ref{svd}, in a local region of $x\in\mathcal{M}$, we define the matrix of weighted vectors, $X$, with columns,
\[ X_i = D(x)^{-1/2}\exp\left(-\frac{||x_i - x ||^2}{4\epsilon}\right)(x_i-x). \]  
Letting $\sigma_l$ be the singular values of $X$, when $\epsilon$ is well tuned the first $d$ singular values obey the scaling law $\sigma_l \propto \sqrt{\epsilon}$ and the remaining $m-d$ singular values (where $m$ is the ambient space dimension) are higher order, namely $\sigma_l = \mathcal{O}(\epsilon)$.  Notice that the $m-d$ singular values which are $\mathcal{O}(\epsilon)$ are not necessarily proportional to $\epsilon$; indeed they can be exactly zero in the case of a linear manifold such as a plane embedded in $\mathbb{R}^3$.  One strategy would be to threshold the singular values, however, by adding a small amount of noise to the data set in the ambient space, we can easily produce singular values which are greater than $\epsilon$.  We illustrate these issues in Figure \ref{singularVals} by embedding a torus into $\mathbb{R}^{30}$ where the first three coordinates are the standard embedding of the torus and the remaining 27 coordinates results from applying a randomly-generated orthogonal transformation to the first three coordinates raised to the third power and divided by 30.  Cubing the coordinates results in a high curvature embedding, which leads to large constants on the $\mathcal{O}(\epsilon)$ bound on the singular values corresponding to singular vectors that are orthogonal to the manifold.  The orthogonal transformation generates a nontrivial embedding into $\mathbb{R}^{30}$ and the addition of Gaussian noise makes this a highly complex embedding of an intrinsically simple data set.  In Figure \ref{singularVals} we show the singular values for the clean and noisy 30-dimensional embeddings. Notice that thresholding singular values less than $\epsilon$ may be effective when the data lies exactly on the manifold (left), however the high curvature can result in nontrivial constants in the $\mathcal{O}(\epsilon)$ bound.  The addition of noise implies that the dimension of the manifold is greater than two for some values of $\epsilon$ (for example $\epsilon \approx 10^{-2}$).  For $\epsilon \in [2 \times 10^{-2},10^{-1}]$ the third largest singular value is larger than $\epsilon$ but does not obey the scaling law $\epsilon^{1/2}$.  While thresholding alone cannot detect the two-dimensional structure, the scaling laws reveal the true dimension of the manifold.

To incorporate the scaling laws of the singular values into the tuning of $\epsilon$ and the dimension estimation, we introduce the following measure of dimension,
\[ d_2(\epsilon) \equiv 2 \sum_{l=1}^{\textup{floor}(d_1)} \frac{d \log(\sigma_l)}{d \log(\epsilon)} + 2 (d_1-\textup{floor}(d_1)) \frac{d \log(\sigma_{\textup{floor}(d_1)+1})}{d \log(\epsilon)}. \] 
Notice that when $d_1$ is an integer, the second term is zero, and the summation is simply the sum of the first $d_1$ scaling laws.  If the first $d_1$ singular values correspond to tangent vectors, then the associated scaling laws should be $1/2$, and in this case, we would find $d_2 = 2 \sum_{l=1}^{d_1} 1/2 = d_1$.  More generally, we can see that the summation can be rewritten as, 
\[ 2 \sum_{l=1}^{\textup{floor}(d_1)} \frac{d \log(\sigma_l)}{d \log(\epsilon)} = 2 \frac{d}{d\log(\epsilon)} \log \left( \prod_{l=1}^{d_1} \sigma_l \right), \]
which reveals this second dimension to be related to the determinant since it comes from a product of singular values (as opposed to $d_1$, which comes from a summation of singular values).  The final term is included so that $d_2$ is a smooth function of $\epsilon$. 

 \begin{figure}[h]
  \begin{center}
\includegraphics[width=.45\linewidth]{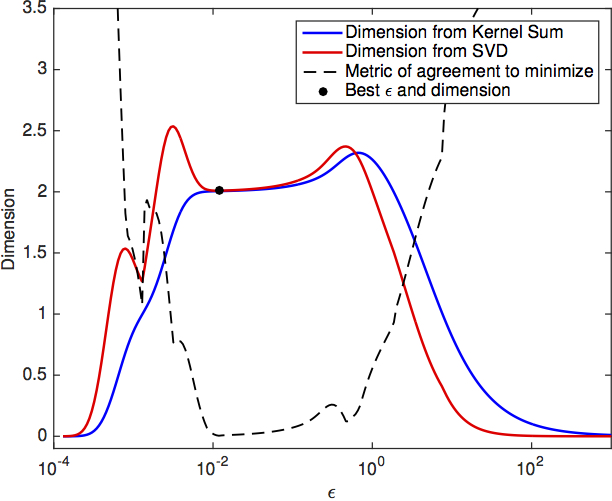}
 \includegraphics[width=.45\linewidth]{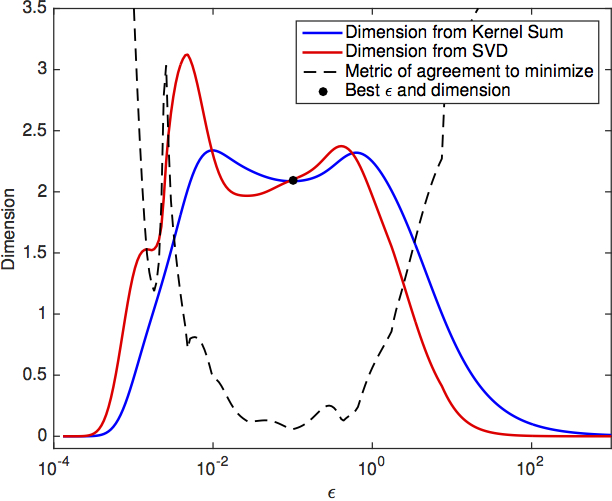} 
 \includegraphics[width=.45\linewidth]{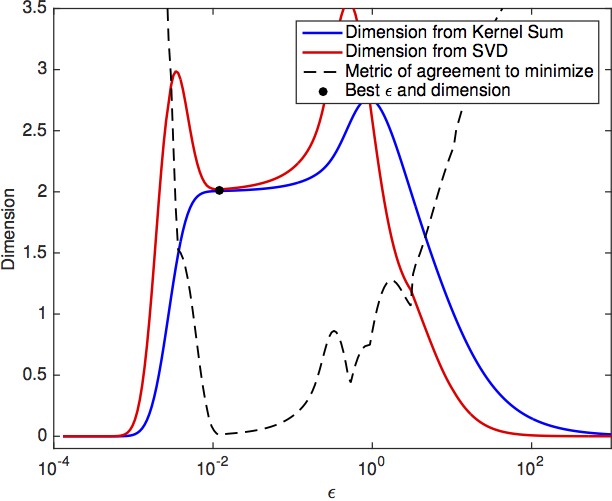}
 \includegraphics[width=.45\linewidth]{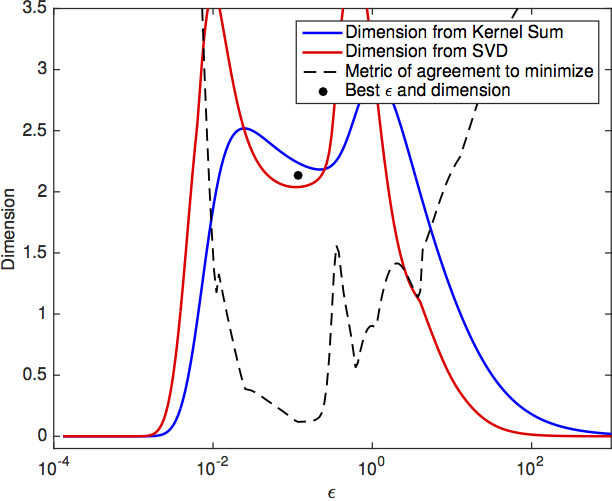} 
\caption{\label{tuningEpsilon} Top Row: Dimension measures $d_1$ (blue) and $d_2$ (red) as functions of the bandwidth $\epsilon$ at the base point $(1.996,0.126,1.000)^\top$ corresponding to the data set sampled from the torus (left) and the noisy torus (right) shown in Figure \ref{svdscaling} of Section \ref{svd}.  The metric of agreement, $M(\epsilon)$, is shown as the dotted black curve.  The solid black dot represents the bandwidth that minimizes the metric along with the average dimension at the optimal $\epsilon$. Bottom Row: Same curves for the 30-dimensional high-curvature embedding used in Figure \ref{singularVals} (left) and with 30-dimensional Gaussian noise (right)}
\end{center}
\end{figure} 

For each value of $\epsilon$ we now have two estimates the dimension, and when $\epsilon$ is well-tuned these two estimates of the intrinsic dimension should agree, so we choose $\epsilon$ to minimize the relative disagreement $\frac{d_1(\epsilon) - d_2(\epsilon)}{d_{\textup{ave}}(\epsilon)}$ where we set the intrinsic dimension to be, 
\[ d_{\textup{ave}}(\epsilon) \equiv (d_1(\epsilon) + d_2(\epsilon))/2. \]
A slight complication is that the curves $d_1(\epsilon)$ and $d_2(\epsilon)$ can intersect multiple times, as shown in Figure \ref{tuningEpsilon}.  In order to ensure that the scaling laws are stationary at the intersection point, we would also like to minimize the derivatives $\left|\frac{d\log d_1}{d\log \epsilon}\right|$ and $\left|\frac{d\log d_2}{d\log \epsilon}\right|$.  Thus, as a practical method of choosing $\epsilon$, we minimize the metric,
\[ M(\epsilon) \equiv  \left|\frac{d_1(\epsilon)-d_2(\epsilon)}{d_{\textup{ave}}(\epsilon)}\right| + \left|\frac{d\log d_1}{d\log \epsilon}\right| + \left|\frac{d\log d_2}{d\log \epsilon}\right| \]
where the derivatives are numerically discretized.

We demonstrate this method of tuning the bandwidth $\epsilon$ on the example in Section \ref{svd} and the results are shown in the top row of Figure \ref{tuningEpsilon}. We also applied this method of tuning the bandwidth to the 30-dimensional high curvature embedding from Figure \ref{singularVals}, and the results are shown in the bottom row of Figure \ref{tuningEpsilon}. The optimal bandwidth shown in the top row of Figure \ref{tuningEpsilon} was used to plot the singular vectors in Figure \ref{svdscaling} above. 

We should note that there are many other approaches one could take to estimate the intrinsic dimension of manifolds using the facts introduced in this section. In particular, there are many thresholding methods that could be applied to find integer dimensions.  Motivated by applications to noisy and fractal data sets (which fall outside of the current theory) we have developed a non-integer measure of dimension based on scaling laws.  Moreover, notice that in Figure \ref{annulus1}, different parts of the manifold contract at different rates, so that the dimension of the manifold does not appear constant.  As a result of this, in Section \ref{section4} and in \ref{numerics} we will use the rescaled diffusion mapping $\hat{\Phi}$ of Section \ref{section21} with a locally determined dimension $d(x_i)$.  Whichever method is used to estimate dimensions, the examples in this section show that both the magnitudes and the scaling laws of the singular values should be incorporated.

A significant drawback of the method of tuning the bandwidth $\epsilon$, introduced in this section, is that computing $d_2(\epsilon)$ requires computing the singular value decomposition of the weighted vectors $X$, for every base point and a large range of bandwidth parameters.  Due to the increase in computational complexity, in all of the examples below (and in the algorithm of \ref{numerics}) we use the simple method of maximizing $d_1$ to choose the bandwidth.  We suspect that this method of choosing the bandwidth is sufficient for the examples below due to low curvature embeddings with small noise, so we included this new method of tuning $\epsilon$ to demonstrate a robust tuning method for more complex data sets.

\section{Iterated Diffusion Map (IDM)}\label{section4}

In this section we consider representing general maps $\mathcal{H}$ that can take data in high-dimensional spaces to lower-dimensional spaces, generalizing the result in \cite{LK} that was reviewed in Section~\ref{section22}. In particular, we will make use of Theorem~\ref{maintheorem} to find an isometric embedding of $\mathcal{M}$ with respect to the appropriate geometry such that these new embedded coordinates emphasize the feature of interest $\mathcal{H}(\mathcal{M})=\mathcal{N}$. In analogy to the diagram in Section~\ref{section22}, we shall see that the proposed method represents $\mathcal{H}$ with a linear map between the iterated diffusion mapping of the data manifold $\mathcal{M}$ and the rescaled diffusion coordinates of feature space $\mathcal{N}$.  

One of the challenges is that the result in \cite{LK} is not immediately applicable since $\mathcal{H}$ is not assumed to be a diffeomorphism, and therefore the kernel constructed in \eqref{diffeokernel} from $D\mathcal{H}$ is not necessarily a local kernel. To see this, we can define a covariance matrix $C(x)^{-1} = D\hat{\mathcal{H}}(x)^\top D\hat{\mathcal{H}}(x)$, where $D\hat{\mathcal{H}}(x)$ is the local derivative in the ambient space estimated by linear regression as discussed in Section~\ref{linreg}. If we naively form the kernel $K(\epsilon,x,y)$ from \eqref{PK} with covariance matrix $C(x)$, then this will not be a local kernel. The problem is that the restriction of $C(x)^{-1}$ to the tangent plane, $c(x)^{-1} = \mathcal{I}(x)C(x)^{-1}\mathcal{I}(x)^\top = D{\mathcal{H}}(x)^\top D{\mathcal{H}}(x)$, may not be full rank since the map $\mathcal{H}$ may take the manifold $\mathcal{M}$ to a lower-dimensional manifold $\mathcal{H}(\mathcal{M})$. If $c(x)^{-1}$ is not full rank, then there exists a nontrivial vector $u \in T_x\mathcal{M}$ such that $u^\top c(x)^{-1} u = 0$ (in fact $c(x)^{-1}u=0$), so if $y-x = (u,q(u))$ we find $K(\epsilon,x,y) = \mathcal{O}(1)$, which means that $K$ does not have the exponential decay, so $K$ is not a local kernel (see Section~\ref{section22} and \cite{LK}).

Often the kernel $K$ is constructed using the $k$ nearest neighbors, so that $K(\epsilon,x,y) \equiv 0$ by definition when $y$ is not in the list of the $k$ nearest neighbors of $x$, and vice-versa.  When the $k$ nearest neighbor algorithm is used, technically the kernel $K$ constructed with a rank deficient covariance matrix is still a local kernel since the kernel still has an implicit decay that can be bounded above by an exponential function.  However, the localization caused by the $k$ nearest neighbor algorithm has a very sharp cutoff such that the corresponding operator approximated by the kernel is very sensitive to the choice of $k$. 

In order to use the local kernels theory to represent the feature map $\mathcal{H}$, we propose a novel algorithm called the iterated diffusion map (IDM).  The IDM will make use of local kernels which use small perturbations of identity covariance matrices such that Theorem~\ref{maintheorem} is applicable on each iteration. In Section~\ref{IDM}, we present the IDM and show that it is a discrete approximation of an intrinsic geometric flow. In Section~\ref{productman}, we show that if the data space $\mathcal{M}$ is a product of the feature space and the irrelevant space, then IDM will produce a quotient manifold that is isometric to the feature space, eliminating the irrelevant dimension. Finally, we will show numerical results with IDM in Section~\ref{examples}, highlighting its advantages and limitations.  The numerical algorithm of the IDM is outlined in \ref{numerics}.

\subsection{IDM as an Intrinsic Geometric Flow}\label{IDM}

We now introduce the IDM algorithm for feature identification. The method assumes the availability of a pair of data sets $x_i\in\mathcal{M}\subset\mathbb{R}^m$ and $y_i=\mathcal{H}(x_i)\in\mathcal{N}\subset\mathbb{R}^n$, where $\mathcal{H}$ is not assumed to be a diffeomorphism and $\mathcal{N}$ may even be lower dimension than $\mathcal{M}$. With this training data, we apply the linear regression method in Section~\ref{linreg} to approximate the local derivative $D\hat{\cal H}$ in the ambient space which is subsequently used to define a new covariance,
\begin{align}\label{Cdel} 
C_{\mathcal{H}^{(0)}}(x) = \left((1-\tau)\textup{I}_{m\times m} +  \tau D\hat{\mathcal{H}}(x)^\top D\hat{\mathcal{H}}(x)\right)^{-1}, 
\end{align}
where $\textup{I}_{m\times m}$ is the $m\times m$ identity matrix. Notice that with this construction, $C_{\mathcal{H}^{(0)}}(x)$ is guaranteed to be positive definite, even when $D{\mathcal{H}}(x)^\top D{\mathcal{H}}(x)$ is not a full rank matrix (where the relation of $D\mathcal{H}$ and $D\hat{\cal H}$ is defined in \eqref{DhatH}). With the definition in \eqref{Cdel}, we implicitly define a map $\mathcal{G}:\mathcal{M}\to\mathcal{M}$ such that 
$D\mathcal{G}(x)^\top D\mathcal{G}(x)= C_{\mathcal{H}^{(0)}}(x)^{-1}$. When $\tau\ll 1$, intuitively, $\mathcal{G}$ is a small perturbation of an identity map on $\mathcal{M}$ since
\begin{align}
D\mathcal{G}(x) = \textup{I}_{m\times m} - \frac{1}{2}\tau \Big(D\hat{\mathcal{H}}(x)^\top D\hat{\mathcal{H}}(x)-\textup{I}_{m\times m}\Big)+\mathcal{O}(\tau^2).\nonumber
\end{align}  
Unlike the sharp decay due to the $k$ nearest neighbor cutoff, $C_{\mathcal{H}^{(0)}}(x)$ achieves a smooth decay even in directions where $D\mathcal{H}(x)D\mathcal{H}(x)^\top$ is rank deficient. Using the prototypical kernel, \[K(\epsilon,x,y) = \exp\left(-(y-x)^\top C_{\mathcal{H}^{(0)}}(x)^{-1}(y-x)/2 \right),\] along with the construction in Theorem \ref{maintheorem}, we approximate the operator $\Delta_{g_{\mathcal{H}^{(0)}}}$ which is the Laplace-Beltrami operator with respect to the Riemannian metric, 
\begin{align}\label{gH} 
g_{\mathcal{H}^{(0)}} = c_{\mathcal{H}^{(0)}}^{-1/2}g_{\cal M} c_{\mathcal{H}^{(0)}}^{-1/2} = ((1-\tau)\textup{I}_{d\times d} + \tau D{\mathcal{H}}^\top D{\mathcal{H}})^{1/2} g_{\cal M} ((1-\tau)\textup{I}_{d\times d} +  D{\mathcal{H}}^\top D{\mathcal{H}})^{1/2}, \end{align}
where $c_{\mathcal{H}^{(0)}}(x) = \mathcal{I}(x)C_{\mathcal{H}^{(0)}}(x)\mathcal{I}(x)^\top$. 
Notice that if we build a diffusion map $\Phi^{(0)}_{s}(x) =
 (e^{s\lambda_1}\varphi_1(x),...,e^{s\lambda_M}\varphi_M(x))^\top\equiv x^{(1)}$ using the eigenfunctions of $\Delta_{g_{\mathcal{H}^{(0)}}}$ (approximated by the local kernel construction) this gives an approximately isometric embedding of $\mathcal{M}$ with respect to the metric $g_{\mathcal{H}^{(0)}}$, for small enough parameter $s$. Moreover, the new metric $g_{\mathcal{H}^{(0)}}$ in \eqref{gH} puts a larger weight on directions in which $D{\mathcal{H}}$ are large, which are the direction associated with the range space of $\mathcal{H}$. 

The key point that makes the iterated diffusion map useful is that the local kernels with covariance defined below (cf. \eqref{covariance}), change the geometry, as opposed to iterating the diffusion maps using identity covariance, $C(x)=I_{m\times m}$, as discussed in Section~\ref{section21}. In particular, the $\ell$-th iteration is performed on the coordinate 
\begin{align}
x^{(\ell-1)}=\Phi^{(\ell-2)}_{s}(x^{(\ell-2)}),  \quad \ell = 2,3,\dots, 
\end{align}
where $x^{(0)}\equiv x$, with induced feature maps $\mathcal{H}^{(\ell-1)}: \mathbb{R}^M\to\mathcal{N}$ defined as follows, 
\begin{align} \mathcal{H}^{(\ell-1)}(x^{(\ell-1)}) \equiv \mathcal{H}(x), \quad   \ell = 2,3,\dots. \label{featuremap}\end{align}
Numerically, we approximate the local derivative $D\hat{\mathcal{H}}^{(\ell-1)}$ of $\mathcal{H}^{(\ell-1)}$ in the ambient space by the linear regression method in Section~\ref{linreg}. In this particular implementation, $X_j^{(\ell-1)}$ and $Y_j^{(\ell-1)}$ in Theorem~\ref{thm1} are defined as,
\begin{align}
X_j^{(\ell-1)} &= D(x^{(\ell-1)})^{-1/2} \exp{\left( -\frac{\|x_j^{(\ell-1)}-x^{(\ell-1)}\|^2}{4\epsilon}\right)}(x_j^{(\ell-1)}-x^{(\ell-1)}), \nonumber\\ Y_j^{(\ell-1)} &= D(x^{(\ell-1)})^{-1/2} \exp{\left( -\frac{\|x_j^{(\ell-1)}-x^{(\ell-1)}\|^2}{4\epsilon}\right)}(y_j-y), \nonumber
\end{align}
where $x_j^{(\ell-1)}:= (\Phi^{(\ell-2)}_{s}\circ  \Phi^{(\ell-3)}_{s} \circ \ldots \circ  \Phi^{(0)}_{s}) (x_j)$ for $\ell\geq 2$. Given $D\hat{\mathcal{H}}^{(\ell-1)}$, we define local kernels induced by covariance matrices, 
\begin{align}C_{\mathcal{H}^{(\ell-1)}}(x^{(\ell-1)}) = \left((1-\tau)\textup{I}_{M\times M} +\tau  D\hat{\mathcal{H}}^{(\ell-1)}(x^{(\ell-1)})^\top D\hat{\mathcal{H}}^{(\ell-1)}(x^{(\ell-1)})\right)^{-1}, \quad \ell= 2, 3, \ldots.\label{covariance} \end{align}
We can now repeat the local kernel construction above using the covariance $C_{\mathcal{H}^{(\ell-1)}}(x^{(\ell-1)})$ to produce eigenfunctions and eigenvalues of $\Delta_{g_\mathcal{H}^{(\ell-1)}}$ and obtain $x^{(\ell)}= \Phi^{(\ell-1)}_{s}(x^{(\ell-1)})$.
  For $s$ sufficiently small, the new coordinates $x^{(\ell)}$ will be an approximately isometric embedding of $\mathcal{M}$ with respect to the metric,
 \begin{align}
 g_{\mathcal{H}^{(\ell-1)}} =  c_{\mathcal{H}^{(\ell-1)}}^{-1/2}g_{\mathcal{H}^{(\ell-1)}} c_{\mathcal{H}^{(\ell-1)}}^{-1/2} 
 = c_{\mathcal{H}^{(\ell-1)}}^{-1/2} \cdots c_{\mathcal{H}^{(0)}}^{-1/2}g_{\cal M}\, c_{\mathcal{H}^{(0)}}^{-1/2} \cdots c_{\mathcal{H}^{(\ell-1)}}^{-1/2},\label{metricmap}
 \end{align}
where $c_{\mathcal{H}^{(j)}}(x) = \mathcal{I}(x)C_{\mathcal{H}^{(j)}}(x)\mathcal{I}(x)^\top$ for $j=0,\ldots,\ell-1$.
Each iteration of the diffusion map further emphasizes the directions on the manifold $\mathcal{M}$, which are important to the function $\mathcal{H}$. Moreover, the map $\mathcal{H}(x)$ is a fixed point of the iterated diffusion map process.  To see this, assume that for some $k$ we have $x^{(k)} = \mathcal{H}(x)$, then we find
\[ \mathcal{H}^{(k)}(\mathcal{H}(x)) = \mathcal{H}^{(k)}(x^{(k}) = \mathcal{H}(x), \]
so $D\mathcal{H}^{(k)}D\mathcal{H} = D\mathcal{H}$ which implies that $D\mathcal{H}^{(k)} = I_{d\times d}$ such that $c_{\mathcal{H}^{(k)}}=I_{d\times d}$ so $g_{\mathcal{H}^{(k+1)}} = g_{\mathcal{H}^{(k)}}$ and therefore $x^{(k+1)} = x^{(k)} = \mathcal{H}(x)$.  Similarly, any isometric embedding $\iota_{\mathcal{N}}(\mathcal{H}(x))$ is a fixed point of the iterated diffusion map.  To see this, assume $x^{(k)} = \iota_{\mathcal{N}}(\mathcal{H}(x))$ and note that $\mathcal{H}^{(k)}(\iota_{\mathcal{N}}(\mathcal{H}(x))) = \mathcal{H}(x)$ so that $D\mathcal{H}^{(k)}D\iota_{\mathcal{N}}D\mathcal{H} = D\mathcal{H}$. Since $\iota_{\mathcal{N}}$ is an isometric embedding, we have $D\iota_{\mathcal{N}}=I$ and this implies that $D\mathcal{H}^{(k)} = I$. It remains an open question whether this fixed point is attracting in the general case, and further analysis is needed to understand this issue.

One possible interpretation of the IDM is as a discretization of a geometric flow. To see this, we define,
\[ c(x,t+\tau) = \left((1-\tau)\textup{I}_{d\times d} +  \tau D\mathcal{H}(x(t))^\top D\mathcal{H}(x(t))\right)^{-1}, \]
as a continuous analog of \eqref{covariance}, where $x(t)$ is an isometric embedding of $(\mathcal{M},g(t))$ where $g(0)=g_{\mathcal{M}}$ and with a feature map defined continuously $\mathcal{H}(x(t)) = \mathcal{H}(x(0))$, where $x(0) = x$ to mimic the discrete setting in \eqref{featuremap}. The new metric introduced by $c(x,t+\tau)$ would be,
\begin{align} 
g(t+\tau) &= c(x,t+\tau)^{-1/2}g(t) c(x,t+\tau)^{-1/2}\nonumber\\ 
&= (\textup{I} + \tau (D\mathcal{H}^\top D\mathcal{H}-\textup{I}))^{1/2} g(t) (\textup{I} + \tau (D\mathcal{H}^\top D\mathcal{H}-\textup{I}))^{1/2} \nonumber \\ 
&= g(t) + \frac{\tau}{2} \Big(D\mathcal{H}^\top D\mathcal{H} g(t) + g(t)D\mathcal{H}^\top D\mathcal{H}-2g(t)\Big) + \mathcal{O}(\tau^2).  
\end{align}
Rewriting the previous equation we find,
\begin{align} \frac{dg}{dt} = \lim_{\tau\rightarrow 0} \frac{g(t+\tau) - g(t)}{\tau} =-g+ \frac{1}{2} \Big(D\mathcal{H}^\top D\mathcal{H} g + gD\mathcal{H}^\top D\mathcal{H}\Big), \label{flow}\end{align}
which is an equation describing an intrinsic geometric flow. Notice again that if $D\mathcal{H}=I$, then $g$ is an equilibrium solution of \eqref{flow}. We should note that the geometric flow in \eqref{flow} is nonlinear since the map $\mathcal{H}$ depends on the metric $g$ in nontrivial fashion (since $\mathcal{H}$ maps an isometric embedding of $(\mathcal{M},g(t))$ to the feature of interest in $\mathcal{H}(\mathcal{M})$). This is the reason why it is not straightforward to see whether there are other equilibrium solutions or even to determine the stability of any equilibrium solution. The standard linear stability analysis suggests that if $g^*$ is the fixed point of \eqref{flow}, then $g^*$ is locally attracting when the real part of all of the eigenvalues of the linearized operator $D_g\Big(D\mathcal{H}^\top D\mathcal{H} g + gD\mathcal{H}^\top D\mathcal{H}\Big)|_{g=g^*}$ is less than 2. 

\subsection{IDM for Product Manifolds}\label{productman}

We now consider a simple case where the manifold $\mathcal{M}$ is a product space $\mathcal{M} = \mathcal{N} \times \mathcal{P}$, such that $\mathcal{N}$ is the feature space and $\mathcal{P}$ contains variables we wish to ignore.  In this case, the map $\mathcal{H}:\mathcal{M} \to \mathcal{N}$ has a particularly simple structure.  In each local neighborhood we can find coordinates $x = (y,z) \in \mathcal{M}$ where $y$ are coordinates on $\mathcal{N}$ and $z$ are coordinates on $\mathcal{P}$.  In these coordinates the metric $g$ will naturally decompose into a block diagonal matrix.  The first block represents the metric $g_{\mathcal{N}}$ on $\mathcal{N}$ and this block is $d_{\mathcal{N}} \times d_{\mathcal{N}}$ and the second block represents the metric $g_{\mathcal{P}}$ on $\mathcal{P}$ and this block is $d_{\mathcal{P}} \times d_{\mathcal{P}}$. Since $\mathcal{H}(\mathcal{M}) = \mathcal{N}$ maps each point to the feature of interest, in these local coordinates, the feature map will take the form $\mathcal{H}(x) = \mathcal{H}(y,z) = y$.  Moreover, in these coordinates, $D{\mathcal{H}}(x)$ is a block diagonal matrix where the first $d_{\mathcal{N}} \times d_{\mathcal{N}}$ submatrix is the identity matrix and the remaining entries are all zero.  So we find that $D{\mathcal{H}}^\top D{\mathcal{H}}-I$ is again a block diagonal matrix, where the bottom $d_{\mathcal{P}}\times d_{\mathcal{P}}$ block is equal to $-I$ and the remaining entries are zero.  Writing the geometric flow \eqref{flow} in these coordinates we find,
\begin{align} \dot g = \frac{1}{2} \left((D{\mathcal{H}}^\top D{\mathcal{H}}-I) g + g(D{\mathcal{H}}^\top D{\mathcal{H}} - I)\right) = \frac{1}{2}\left(\left(\begin{array}{cc} 0 & 0 \\ 0 & -I \end{array}\right)\left( \begin{array}{cc} g_{\mathcal{N}} & 0 \\ 0 & g_{\mathcal{P}} \end{array}\right) + \left( \begin{array}{cc} g_{\mathcal{N}} & 0 \\ 0 & g_{\mathcal{P}} \end{array}\right)\left(\begin{array}{cc} 0 & 0 \\ 0 & -I \end{array}\right) \right), \label{prodflow}\end{align}
which implies that $\dot g_{\mathcal{N}} = 0$ and $\dot g_{\mathcal{P}} = -g_{\mathcal{P}}$.  This shows that for product manifolds of the geometric flow \eqref{flow} will contract the irrelevant variables to zero and leave the features of interest unchanged.  So for sufficiently small discretization $\tau$ and in the limit of sufficiently many iterations, the IDM will construct the quotient map from the product manifold to an isometric copy of the feature space.  At this point, the data can easily be mapped to the feature space using the method of Section \ref{section22}.  In fact, since the quotient manifold is already isometric to the feature space, one could simply estimate a linear map between the rescaled diffusion coordinates of the quotient manifold and those of the feature space (since these coordinates are canonical up to rotation as shown in Section \ref{section21}).  In analogy to the diagram in Section \ref{section22} which represents a diffeomorphism, we can summarize the IDM construction of the quotient map with the following diagram,
  \begin{center}\vspace{-15pt}
$$  \begin{array}[c]{ccc}
\mathcal{M}=\mathcal{N}\times \mathcal{P}&\xrightarrow{\ \ \ \ \ \mathcal{H}\ \ \ \ \ }&\mathcal{N}\\ \\
\hspace{50pt}\left\downarrow\rule{0cm}{.5cm}\right.   \scriptstyle{\Psi \equiv \lim_{\ell \to \infty,s\to 0}\Phi^{(\ell)}_{s}\circ \cdots \circ  \Phi^{(0)}_{s}}&&\left\downarrow\rule{0cm}{.5cm}\right. \scriptstyle{\tilde\Phi}\\ \\
L^2(\mathcal{N},\tilde g) \approx \mathbb{R}^{M}&\xrightarrow{\ \ \ \ \ H\ \ \ \ \ }&L^2
(\mathcal{N},g_{\mathcal{N}}) \approx \mathbb{R}^{M}
\end{array}$$
\end{center}
where $\Psi$ represents the iterated diffusion map, $\tilde \Phi$ are the rescaled diffusion coordinates of $\mathcal{N}$.  The above diagram shows how $\mathcal{H}$ is represented by an orthogonal linear transformation $H$ via $\mathcal{H} = \tilde\Phi^{-1} \circ H \circ \Psi$.  

Moreover, consider the case when $\mathcal{M} = \mathcal{N}\times\mathcal{P}$, but the feature of interest is $\mathcal{F}(\mathcal{N})$, where $\mathcal{F}$ is a diffeomorphism.  In this case, the block diagonal structure $D\mathcal{H}$ and of \eqref{prodflow} will still hold, and in particular we still find $\dot g_{\mathcal{P}} = -g_{\mathcal{P}}$.  This shows that the flow still contracts the irrelevant variables $\mathcal{P}$ to zero, and the only difference is that we will find $\dot g_{\mathcal{N}} = \frac{1}{2}\left((D\mathcal{F}^\top D\mathcal{F}-I)g_{\mathcal{N}} + g_{\mathcal{N}}(D\mathcal{F}^\top D\mathcal{F}-I)\right)$. Notice that the fixed point for this flow 
satisfies $D\mathcal{F}=I$, so we expect in the limit to obtain an isometric copy of $\mathcal{N}$.  However, even if the flow on $g_{\mathcal{N}}$ has not converged, once the IDM has contracted the irrelevant variables $\mathcal{P}$, we can use the construction in Theorem \ref{maintheorem} to represent the final diffeomorphism between $\Psi(\mathcal{M})$ and the feature space $\mathcal{F}(\mathcal{N})$.  In the next section we demonstrate the IDM on two product manifolds, namely the annulus and the torus.  We will also attempt to apply the IDM to manifolds which are not product manifolds and report the empirical results.


\subsection{Examples}\label{examples}

In this section we will demonstrate how the iterated diffusion map is able to contract a manifold onto a lower-dimensional feature of interest.  All the examples use $M=250$ rescaled diffusion coordinates. We found the results to be robust down to around $M=100$ rescaled diffusion coordinates and no improvement above $M=250$.  In the examples below we adjusted the parameter $\tau \in (0,1)$, which defines the discretization of the geometric flow in Section \ref{IDM}, in order to achieve the desired feature in about four iterations of the diffusion map.  In principle, one would like to take $\tau$ as small a possible, however this requires many iterations that are computationally intensive.  Also, we have found that numerical errors can accumulate over large numbers of iterations, which we discuss in the Section \ref{conclusion}.  For a compact description of the numerical algorithm, see \ref{numerics}.

 \begin{figure}[h]
  \begin{center}
\includegraphics[trim={6cm 0 4.3cm 0},clip,width=.95\linewidth]{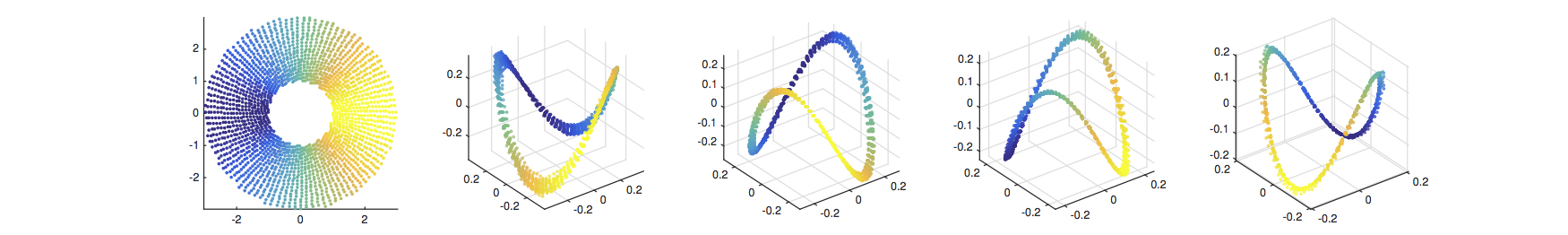}
 \includegraphics[trim={6cm 0 4.3cm 0},clip,width=.95\linewidth]{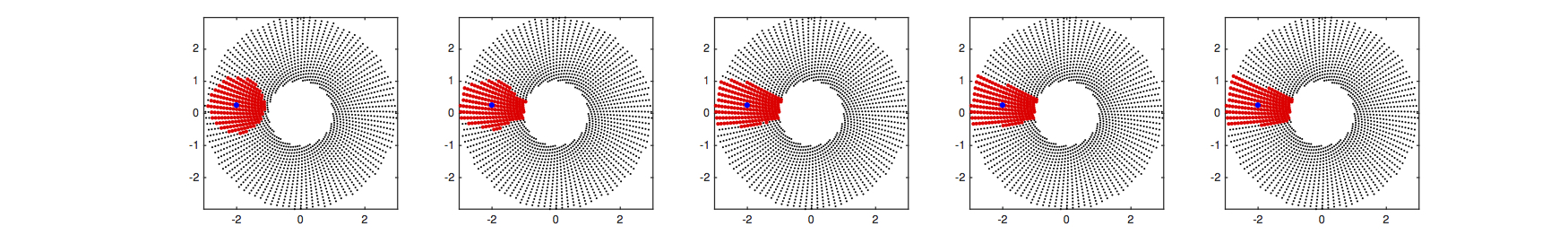}
\caption{\label{annulus2} Top: Original annulus data set colored according to the feature of interest (leftmost), followed by four iterations of the diffusion map using the local kernel defined in Section \ref{section4} with $\tau = 0.3$.  Each diffusion map shows the first three rescaled diffusion coordinates colored according to the feature of interest (the angle of the data point in the original annulus). However, 250 rescaled diffusion coordinates are maintained at each step.  Bottom: Original data set showing the 200 nearest neighbors (red) of the blue data point, where the neighbors are found in the corresponding iterated diffusion map embedding.}
\end{center}
\end{figure}

The first example is the annulus described in Section \ref{section1} which used $\tau = 0.65$.  The annulus is a product space $A = S^1 \times [1,3]$ and in Figure \ref{annulus1} we show the iterated diffusion map recovering the radial component.  If we parameterize the annulus with polar coordinates $(\theta,r) \in [0,2\pi) \times [1,3]$, then the feature of interest in Figure \ref{annulus1} was the coordinate $r$, which shows that the iterated diffusion map is able to change the topology of a manifold (both the dimension and the number of holes are changed in Figure \ref{annulus1}).  Notice that although both the source and target manifolds are less than three dimensional, the iterated diffusion map must move through a three-dimensional embedding in order to transition between these very different geometries.  Indeed, the first application of the diffusion map (with the local kernel described in Section \ref{IDM}) shown in Figure \ref{annulus1} transforms the geometry from an annulus to a cylinder.  Intuitively the cylinder introduces a new variable, height, to represent the feature of interest. This is shown by the coloring in Figure \ref{annulus1} which represents the radius of each point on the original annulus, and varies only with the height of the cylinder.  As the diffusion map is iterated, the geometry evolves as described in Section \ref{section4}, intuitively putting more emphasis on the direction (namely the height) which contains the radial information.  This is manifested as the circle component of the cylinder contracting until the data set becomes a line, thereby representing only the feature of interest as shown by the coloring.

 \begin{figure}[h]
  \begin{center}
\includegraphics[trim={6cm 0 4.5cm 0},clip,width=.95\linewidth]{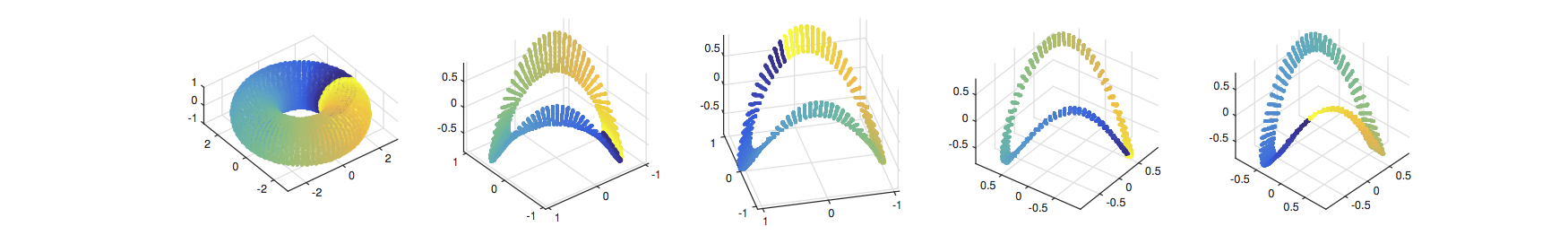}
 \includegraphics[trim={6cm 0 4.5cm 0},clip,width=.95\linewidth]{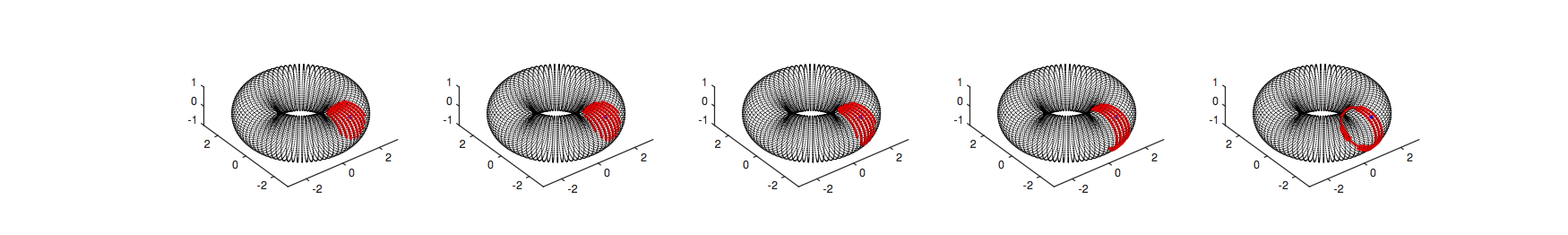}
 \includegraphics[trim={6cm 0 4.5cm 0},clip,width=.95\linewidth]{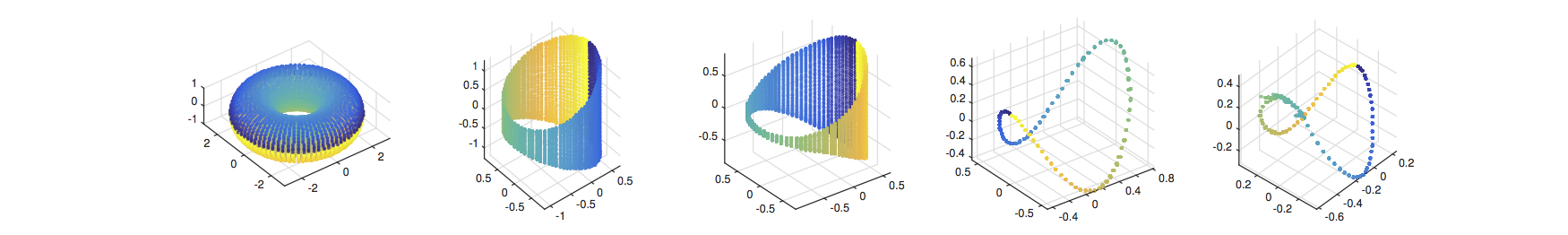}
 \includegraphics[trim={6cm 0 4.5cm 0},clip,width=.95\linewidth]{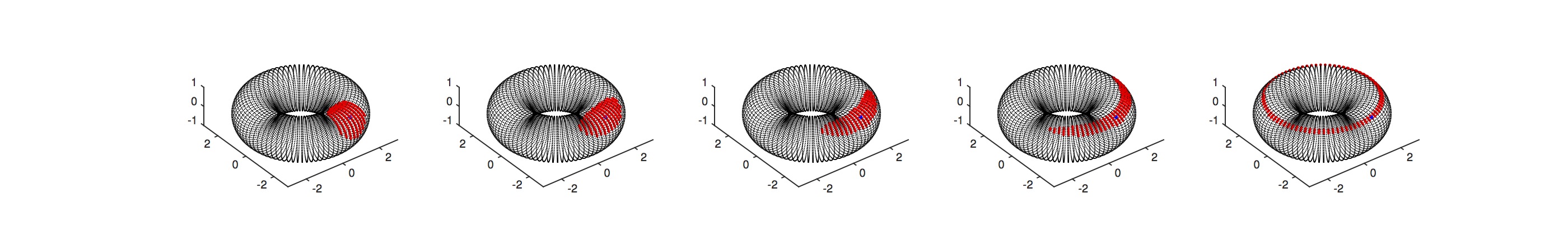}
\caption{\label{torusIDM} Top Row: Original data set colored according to the desired feature (leftmost) followed by four iterations of the IDM with the feature of interest given by $\mathcal{H}(x,y,z) = (\sin(\phi),\cos(\phi))^\top$ and $\tau = 0.4$.  Second Row: Original data set showing the 200 nearest neighbors (red) of the blue data point, where the neighbors are found in the corresponding iterated diffusion map space from the top row.  Third Row: Original data set colored according to the desired feature (leftmost) followed by four iterations of the IDM with the feature of interest given by $\mathcal{H}(x,y,z) = (\sin(\theta),\cos(\theta))^\top$ and $\tau = 0.65$.  Bottom Row: Original data set showing the 200 nearest neighbors (red) of the blue data point, where the neighbors are found in the corresponding iterated diffusion map space from the third row.}
\end{center}
\end{figure}

We now show that the IDM can also contract the annulus onto the other natural feature of interest, namely the angle.  However, note that the single parameter $\theta \in [0,2\pi)$ is not an embedding of the circle, since the periodic boundary conditions cannot be satisfied in $\mathbb{R}^1$.  Instead, to recover the circle from the annulus, the feature of interest is the two-dimensional feature $(\sin(\theta),\cos(\theta))^\top$, which is an embedding of the circle.  In Figure \ref{annulus2} we show the results of applying the iterated diffusion map to the annulus with the feature $\mathcal{H}(\theta,r) = (\sin(\theta),\cos(\theta))^\top$.  

Next we consider a simple example where the manifold is a torus $T^2 = S^1 \times S^1$ with intrinsic coordinates $(\theta,\phi) \in [0, 2\pi)^2$ with periodic boundary conditions.  Since the torus is a product of two circles, parameterized by $\theta$ and $\phi$, respectively, we can consider either of these circles as a lower dimension feature of interest.  For example, when the desired feature is the circle parameterized by $\theta$, the feature valued function is $\mathcal{H}(x,y,z) = (\sin(\theta),\cos(\theta))^\top$.  As shown in Figure \ref{torusIDM}, when the feature of interest on the torus is either of the circles in the product structure, the IDM evolves the manifold by contracting the irrelevant circle until only the feature of interest remains.  Notice that the IDM is able to destroy topological features such as holes in pursuit of the feature of interest.

 \begin{figure}[h]
  \begin{center}
\includegraphics[trim={6cm 0 4.5cm 0},clip,width=.95\linewidth]{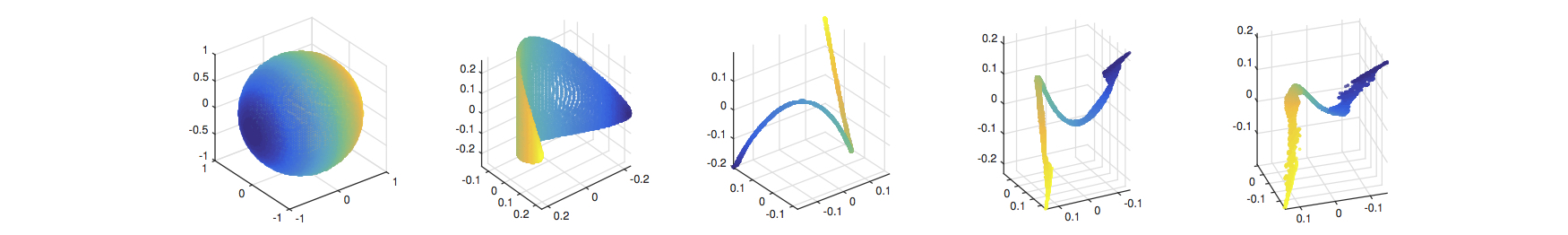}
 \includegraphics[trim={6cm 0 4.5cm 0},clip,width=.95\linewidth]{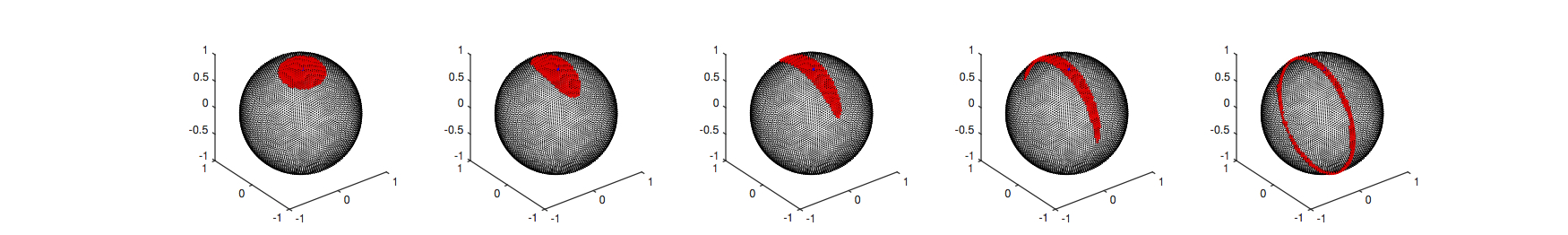}
 \includegraphics[trim={6cm 0 4.5cm 0},clip,width=.95\linewidth]{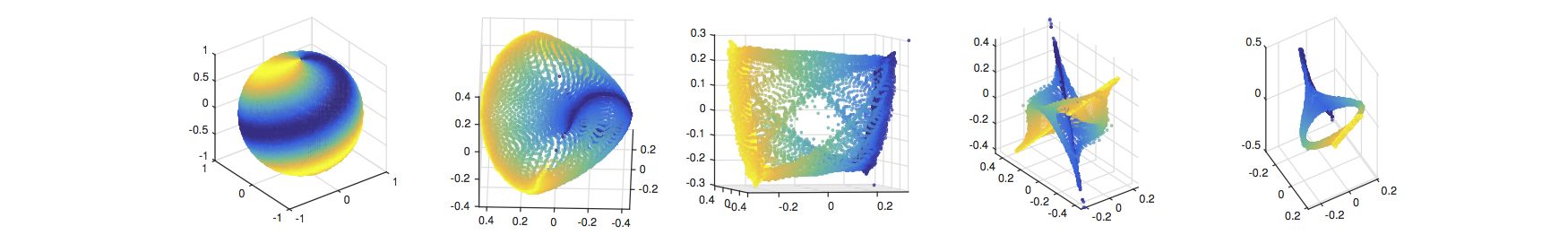}
 \includegraphics[trim={6cm 0 4.5cm 0},clip,width=.95\linewidth]{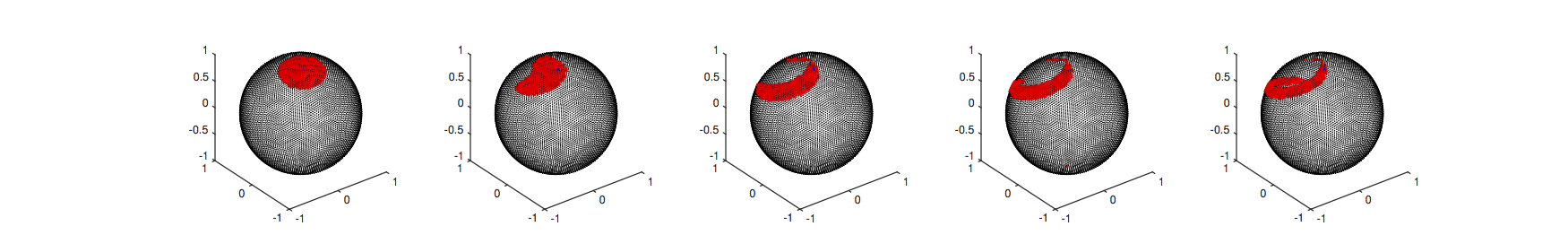}
\caption{\label{sphereIDM} Top: Original data set colored according to the desired feature (leftmost), followed by four iterations of the diffusion maps is the local kernel defined in Section \ref{section4} with $\tau = 0.7$.  Second Row: Original data set showing the 400 nearest neighbors (red) of the blue data point, where the neighbors are found in the corresponding iterated diffusion map space.  Third Row: Original data set colored according to a more complex desired feature (leftmost), followed by four iterations of the IDM with the feature of interest given by $\mathcal{H}(x,y,z) = \sin(\pi z/2 + \tan^{-1}(y/x))$ and $\tau = 0.6$.  Bottom Row: Original data set showing the 400 nearest neighbors (red) of the blue data point, where the neighbors are found in the corresponding iterated diffusion map space from the third row.}
\end{center}
\end{figure}

Finally, we consider a manifold that is not a product space, namely a 2-dimensional unit sphere, and we first choose the feature of interest to be simply the $x$-coordinate of the sphere and the results for this example are shown in Figure \ref{sphereIDM}.  We also consider a sphere with a more complex feature that twists up the sphere, as shown in the third row of Figure \ref{sphereIDM}.  While in these cases the geometric flow cannot be described by the simple product formula in \eqref{prodflow}, the flow still emphasizes the feature of interest and seems to contract the manifold onto a lower dimensional manifold that better represents the feature.

\section{Conclusion}\label{conclusion}

The above results show that for intrinsically low-dimensional data sets, an iterated diffusion map can help identify features that are hidden in the geometric structure of the data.  From a geometric point of view, the IDM approximates a geometric flow that stretches directions on the manifold that are locally correlated to the desired feature and contracts directions that are locally uncorrelated with the desired feature.  When the data manifold is a product of the feature manifold and the irrelevant variables, the geometric flow \eqref{flow} reduces to \eqref{prodflow}, which stably contracts the irrelevant variables to zero.  So for product manifolds, the IDM constructs the quotient map from the product manifold to the feature manifold.  For more general manifolds, this geometric flow appears empirically to converge to a lower-dimensional manifold that better represents the feature of interest.

Several key tools are necessary for the construction of the IDM.  First, as shown in Section \ref{section3}, one needs to be able to estimate the local derivatives of a nonlinear map between manifolds embedded in Euclidean space using only empirical data.  Second, the construction of Section \ref{section4} is necessary to form a local kernel that satisfies the requirements of Theorem \ref{maintheorem}.  Finally, the rescaled diffusion mapping of Section \ref{section2} is needed to give an isometric embedding of the new geometry introduced by the local kernel.  With these three pieces in place, it is possible to iterate the diffusion map in a way that approximates a geometric flow and emphasizes the variable of interest.

However, several important problems remain unsolved.  First, we found empirically that applying too many iterations of the diffusion map lead to apparent numerical instability.  We suspect that this problem arises from accumulated numerical error in the repeated eigensolves required to find the diffusion mappings.  Second, the exact criterion for the convergence of the iterated diffusion map to the desired feature requires a better theoretical understanding of the geometric flow of Section \ref{section4}, such as its attracting set and the stability of the equilibrium points.  Finally, the current algorithm requires the entire manifold to be well-sampled, which means that the data requirements depend on the dimension of the data set and not simply the feature set.  Intuitively, it may be possible to extend the IDM to points that lie in sparsely-sampled regions of the data set, as long as these regions are well-sampled in the feature space.  Ideally, this could reduce the data requirements to only depend on the dimensionality of the very low-dimensional feature set.  However, it is unclear how to extrapolate the IDM into these sparsely-sampled regions of data space, and currently the need to estimate the local derivatives requires fairly dense sampling everywhere on the data manifold.

\section*{Acknowledgments}
The research of J.H. is partially supported by the Office of Naval Research Grants N00014-11-1-0310, N00014-13-1-0797, MURI N00014-12-1-0912 and the National Science Foundation DMS-1317919. T. B. was supported under the ONR MURI grant N00014-12-1-0912.

\bibliographystyle{plain}
\bibliography{VBbib}

\appendix

\section{Numerical Algorithm for the Iterated Diffusion Map} \label{numerics}

Given a data set $\{x_i\}_{i=1}^N \subset \mathcal{M} \subset \mathbb{R}^m$ and a feature $\{y_i=\mathcal{H}(x_i)\}_{i=1}^N \subset \mathcal{N} \subset \mathbb{R}^n$, the algorithm of this section can be used to construct an embedding $\Psi(x_i)$ which emphasizes the feature of interest.  We assume that $x_i$ are points in $\mathbb{R}^m$ which are sampled on (or very close to) a $d$-dimensional manifold $\mathcal{M}$, and that the feature of interest $y_i = \mathcal{H}(x_i)$ lives on a manifold $\mathcal{N}$ which has dimension less than or equal to $d$.  The algorithm also produces a basis of eigenfunctions $\{\psi_j(x_i)\}_{j=1}^M$ that can be used to represent the map $\mathcal{H}$ and extend this mapping to out-of-sample data points $x$ with standard methods such as the Nystr\"om extension.  

The first part of the IDM is a generic algorithm for estimating the derivative $D\mathcal{H}(x_i)$ at each point.  The step-by-step algorithm is outlined in the first box below.  Optionally, if the embedding of $\mathcal{M}$ has high curvature or the data is noisy, a more robust method of tuning the local bandwidth parameter may be used by adding the following substeps to Step 4. and then replacing Step 5. with ``Set $\ell = \textup{argmin}(M)$ and $d(i) = d_{\textup{ave}}(\ell)$.'' 
  
  \begin{enumerate} 
  		\item[(e)] Form the $k\times m$ matrix $\tilde X$ of weighted vectors with $j$-th column $\tilde X_j = \sqrt{\frac{w_j}{D(\ell)}}(x_{I(j)} - x_i)$
		\item[(f)] Compute the singular values $\sigma_1,...,\sigma_m$ of $X$ and store them in $s(\ell,j) = \sigma_j$ for $j=1,...,m$
		\item[(g)] If $\ell > 1$ Compute the scaling law of each singular value $\alpha(\ell-1,j) = \frac{\log(s(\ell,j))-\log(s(\ell-1,j))}{\log(\epsilon(\ell))-\log(\epsilon(\ell-1))}$
		\item[(h)] If $\ell > 1$ Set $d_0 = \textup{floor}(d_1(\ell-1))$ and compute $d_2(\ell-1) = 2 \sum_{j=1}^{d_0} \alpha(\ell-1,j) + 2(d_1-d_0) \alpha(\ell-1,d_0+1)$
		\item[(i)]  If $\ell > 1$ set $d_{\textup{ave}}(\ell-1) = (d_1(\ell-1) + d_2(\ell-1))/2$
		\item[(j)] If $\ell > 1$ set $M(\ell-1) = \left|\frac{d_1(\ell-1)-d_2(\ell-1)}{d_{\textup{ave}}(\ell-1)}\right| + \left|\frac{\log(d_1(\ell))-\log(d_1(\ell-1))}{\log(\epsilon(\ell))-\log(\epsilon(\ell-1))}\right| + \left| \frac{\log(d_2(\ell))-\log(d_2(\ell-1))}{\log(\epsilon(\ell))-\log(\epsilon(\ell-1))} \right|$
		\end{enumerate}

\framebox{
 \begin{minipage}{.96\linewidth}
\vspace{0.2cm}
\large{Algorithm 1: Estimating $D\hat{\mathcal{H}}(x_i)$ with Auto-tuned Bandwidth}
\small

\vspace{5pt} {\bf Inputs:} Data sets $\{x_i\}_{i=1}^N \subset \mathbb{R}^m$ and $\{y_i\}_{i=1}^N \subset \mathbb{R}^n$.  Parameters: number of nearest neighbors, $k$, and number of discrete values of the bandwidth to consider, $L$.

\vspace{5pt} {\bf Outputs:} At each point $x_i$ the algorithm returns the local estimates of the derivative $D\hat{\mathcal{H}}(x_i)$ and the intrinsic dimension $d(i)$ and the sampling density $q(i) = q(x_i)$.

\vspace{5pt} For each $i = 1,...,N$ 
	\begin{enumerate}
	\item Find the $k$-nearest neighbors of $x_i$ in ${\mathbb R}^{m}$, let their indices be $I(j)$ (where $I(1) = i$) for $j=1,...,k$ ordered by increasing distance and let $d(j) = ||x_i-x_{I(j)}||$.  In Section \ref{examples} we used $k=500$. \vspace{2pt}
	\item Tune the local bandwidth $\epsilon$ using steps (c)-()
	\item Define $\epsilon_{\textup{min}} = d(2)/(2\log \epsilon_{\textup{MACH}})$ and $\epsilon_{\textup{max}} = 10 d(k)$
	\item For $\ell=1,...,L$ 
		\begin{enumerate} 
		\item Let $\epsilon(\ell) = \exp\left(\log(\epsilon_{\textup{min}}) + (\ell/L)(\log(\epsilon_{\textup{max}})-\log(\epsilon_{\textup{min}})) \right)$
		\item Compute the weights $w_j = \exp\left(-d(j)^2/(2\epsilon(\ell))\right)$
		\item Compute the sum $D(\ell) = \sum_{j=1}^k w_j$
		\item If $\ell > 1$, compute $d_1(\ell-1) = 2\frac{\log(D(\ell))-\log(D(\ell-1))}{\log(\epsilon(\ell))-\log(\epsilon(\ell-1))}$
		\end{enumerate}
	\item Set $\ell = \textup{argmax}(d_1)$ and $d(i) = d_1(\ell)$
	\item Set $\epsilon = \epsilon(\ell+1)$
	\item Compute the weights $w_j = \exp\left(-d(j)^2/(2\epsilon)\right)$
	\item Compute the sum $D = \sum_{j=1}^k w_j$
	\item Set $q(i) = \frac{(2\pi\epsilon)^{d(i)/2}}{N}D$
	\item Form the $k\times m$ matrix $X$ of weighted vectors with $j$-th column $X_j = \sqrt{\frac{w_j}{D}}(x_{I(j)} - x_i)$
	\item Form the $k\times n$ matrix $Y$ of weighted vectors with $j$-th column $Y_j = \sqrt{\frac{w_j}{D}}(y_{I(j)} - y_i)$
	\item Compute the $m\times n$ matrix $D\hat{\mathcal{H}}(i)$ using the linear least squares regression $D\hat{\mathcal{H}}(i) = (X^\top X)^{-1} X^{\top} Y$
  \end{enumerate}
  \end{minipage}
  } 
		
\vspace{15pt} To compute the Iterated Diffusion Map (IDM), we will iteratively construct $x_i^\ell$, where $x_i^1 = x_i$ and $\ell$ runs up to the desired number of iterations, $t$.  Determining a good stopping criterion is a difficult problem.  If the goal is to estimate the map $\mathcal{H}$ using the method of Section \ref{productman}, then a promising approach is to use cross-validation.  This approach would first compute the rescaled diffusion coordinates of the feature $y_i$ and then attempt a linear regression from $x_i^\ell$ to these diffusion coordinates and iterate until the residual ceases to decrease.  Another significant issue is that the theory of \cite{LK} has not yet been extended to use the variable bandwidth kernels of \cite{BH14VB}.  So even though we have an estimate of the optimal bandwidth at each point, we can only apply Theorem \ref{thm1} with a fixed global bandwidth.  In our examples we found the best choice of global bandwidth was a simple average of the squared distances to the $k=32$ nearest neighbors of each point, averaged over the whole data set.  One reason this ad hoc bandwidth is required is due to the large variations in the dimension that occur as the diffusion map iterates, see for example Figure \ref{annulus1} where some parts of the annulus contract to a line before others.  These variations of the dimension also require us to use a locally rescaled diffusion mapping.  Notice that Step 8 (a)-(d) are the standard diffusion map algorithm using the local kernel defined by $C_{\mathcal{H}}$ and the associated distances $d_{\mathcal{H}}$.  However, to normalize the eigenfunctions in Step 9, we use the locally estimated sampling density $q(i)$.  Also, to form the rescaled diffusion mapping in Step 11, we use the locally estimated dimension $d(i)$.  We found that when a global kernel density estimate and a globally estimated dimension were used, the distances were scaled very differently in different parts of the data set and this distortion led to numerical problems after several iterations.

 \vspace{15pt} \framebox{
 \begin{minipage}{.96\linewidth}
\vspace{0.2cm}
\large{Algorithm 2: The Iterated Diffusion Map (IDM)}
\small

\vspace{5pt} {\bf Inputs:} Data sets $\{x_i\}_{i=1}^N \subset \mathbb{R}^m$ and $\{y_i\}_{i=1}^N \subset \mathbb{R}^n$.  Parameters: number of nearest neighbors, $k$, number of discrete values of the bandwidth to consider, $L$, number of nearest neighbors to use to estimate the global bandwidth, $k_2$, number of eigenfunctions to use in the diffusion map, $M$, geometric flow discretization parameter, $\tau$, and number of iterations, $t$.

\vspace{5pt} {\bf Outputs:} The $M$-dimensional IDM embedding $x_i^t = \Psi(x_i) = \Phi^{(t)} \circ \cdots \circ \Phi^{(1)}(x_i)$ which represent the feature $y_i$.

\vspace{5pt} Set $x_i^1 = x_i$

For each $\ell = 1,...,t$ 
	\begin{enumerate}
	\item Use Algorithm 1, with inputs $\{x_i^\ell\}$ and $\{y_i\}$ to estimate $D\hat{\mathcal{H}}(x_i^{\ell})$, the local dimension $d(i)=d(x_i^{\ell})$ and density $q(i) =q(x_i^{\ell})$
	\item Let $I(i,j)$ be the index of the $j$-th nearest neighbor of $x_i^{\ell}$ and let $d(i,j) = ||x_{I(i,j)}^{\ell}-x_i^{\ell}||$
	\item Define the distance $d_{\mathcal{H}}(i,j) = (1-\tau)d(i,j) + \tau ||D\hat{\mathcal{M}}(x_i)(x_{I(i,j)}^{\ell}-x_i^{\ell})||$ with respect to $C_{\mathcal{H}}(x_i)$ from \eqref{Cdel}
	\item Use the ad hoc global bandwidth estimate $\epsilon = \frac{1}{N k_2} \sum_{i=1,j=1}^{i=N,j=k_2} d_{\mathcal{H}}(i,j)$
	\item Build the local kernel $J(i,j) = \exp\left(-d_{\mathcal{H}}(i,j)^2/2\epsilon \right)$
	\item Build a sparse $N\times N$ matrix $\tilde J$ with $\tilde J_{i,I(i,j)} = J(i,j)$
	\item Symmetrize $\hat J = (\tilde J + \tilde J^\top)/2$
	\item Apply the standard diffusion maps normalizations as in \cite{diffusion,LK}
	\begin{enumerate}
		\item Right Normalization: Set $D_i = \sum_j \hat J_{ij}$ and $K = \hat J_{ij}/(D_i D_j)$
		\item Left Normalization: Set $\hat D_i = \left(\sum_j K_{ij} \right)^{1/2}$ and $\hat K = K_{ij}/(\hat D_i \hat D_j)$
		\item Compute the $M+1$ largest eigenvalues $\xi_{r}$ and associated eigenvectors $\tilde \varphi_{r}$ of $\hat K$ for $r=0,...,M$
		\item Define $\hat \varphi_{r}(x_i^{\ell}) = \tilde \varphi_{\ell}(x_i^{\ell})/\hat D_i$
	\end{enumerate}
	\item Normalize the eigenvectors with respect to the sampling density $\varphi_{r} = \hat \varphi_{r}/\sqrt{\frac{1}{N}\sum_{i=1}^N \hat \varphi_{r}(x_i^{\ell})^2/q(i)}$
	\item Set $s = 10\epsilon$ and $\lambda_{r} = \log(\xi)/\epsilon$
	\item Define the rescaled diffusion mapping $x_i^{\ell+1} = \Phi_s^{(\ell)}(x_i^\ell) = (2\pi)^{d(i)/4}(4s)^{d(i)/4 + 1/2} (e^{\lambda_{1}s}\varphi_1(x_i^\ell),...,e^{\lambda_{M}s}\varphi_M(x_i^\ell) )^\top \in \mathbb{R}^M$
  \end{enumerate}
  \end{minipage}
  }

\end{document}